\newcommand{\norm}[2][]{\left\|#2\right\|_{#1}}
\newcommand{\normm}[2]{\left\|#2\right\|_{#1}}
\newcommand{\betrag}[1]{\left\lvert#1\right\rvert}
\newcommand{\xe}{{x_{ε}}}
\newcommand{\xet}{\xe_t}
\newcommand{\ye}{{y_{ε}}}
\newcommand{\yet}{\ye_t}
\newcommand{\f}[2]{\frac{#1}{#2}}
\newcommand{\Labi}{L^{∞}((a,b))}
\newcommand{\Lab}[1]{L^{#1}((a,b))}
\newcommand{\kl}[1]{\left(#1\right)}
\newcommand{\kkl}[1]{\left[#1\right]}
\newcommand{\set}[1]{\left\{#1\right\}}
\newcommand{\dt}{\,\mathrm{d} t}
\newcommand{\du}{\,\mathrm{d}u}
\newcommand{\dv}{\,\mathrm{d}v}
\newcommand{\ddt}{\f{d}{dt}}
\newcommand{\iab}{\int_a^b}
\newcommand{\calF}{\mathcal{F}}
\newcommand{\xhat}{\hat{x}}
\newcommand{\weakto}{\rightharpoonup}
\newcommand{\weakstarto}{\rightharpoonup_{\!\!\!\!\!\!*}\;}
\newtheorem{thm}{Theorem}[section]
\newtheorem{lemma}[thm]{Lemma}
\newtheorem{remark}[thm]{Remark}
\theoremstyle{definition}
\newtheorem{example}[thm]{Example}
\title{Continuation beyond interior gradient blow-up in a semilinear parabolic equation}
\author{Marek Fila\footnote{e-mail: fila@fmph.uniba.sk} }
\author{Johannes Lankeit\footnote{e-mail: jlankeit@math.upb.de}}
\affil{\footnotesize Department of Applied Mathematics and Statistics, Comenius University,\\ Mlynsk\'a dolina, 84248 Bratislava, Slovakia}
\begin{document}

\maketitle 

\begin{abstract}
\noindent
\textbf{Abstract.} It is known that there is a class of semilinear parabolic equations for which interior gradient blow-up (in finite time) occurs for some solutions. We construct a continuation of such solutions after gradient blow-up. This continuation is global in time and we give an example when it never becomes a classical solution again.\\
\textbf{Key words:} gradient blow-up; semilinear parabolic equation; continuation\\
\textbf{MSC(2010):} 35K55, 35B44
\end{abstract}
% MSC: Vielleicht passend? 
% 35A01 (2010-now) Existence problems: global existence, local existence, non-existence 
% 35A22 (1973-now) Transform methods (e.g. integral transforms) 
% 35B (1973-now) Qualitative properties of solutions
% 35B05 (1973-now) Oscillation, zeros of solutions, mean value theorems, etc. 
% 35B44 (2010-now) Blow-up 
% 35K20 (1973-now) Initial-boundary value problems for second-order parabolic equations 
% 35K58 (2010-now) Semilinear parabolic equations 
% 35K65 (1980-now) Degenerate parabolic equations
% 35K67 (2010-now) Singular parabolic equations 
% 35K92 (2010-now) Quasilinear parabolic equations with p-Laplacian 

\section{Introduction}
Many parabolic PDEs allow for solutions that ``blow up'', i.e. cease to exist at some time, at which some of their norms becomes unbounded. Here it is possible that the function itself remains bounded, while its spatial gradient is the quantity to become unbounded. In most of the situations where such gradient blow-up is known to occur, it takes place on the boundary of the domain (cf. \cite{BD,CG,D,FL,FTW,F,GH,K1,K2,LS,PS3,PS2,PS1,PZ,QR,S1,ZB,ZL}; see below for more details). Much less frequently, gradient blow-up in the interior of the domain has been observed. It has been shown to arise in some quasilinear parabolic equations \cite{G,AI}, but for even simpler, semilinear equations we are only aware of the examples in \cite{AF}. The class of problems considered there is a generalization of the prototypical example
\begin{align}\label{ueq}
\begin{cases}
 u_t=u_{xx} + f(u)u_x^3 \qquad &\text{in } (-1,1)\times(0,T),\\
 u(-1,t)=a, \qquad u(1,t)=b\qquad &\text{for all } t\in(0,T),\\
 u(\cdot,0)=u_0\qquad &\text{in } (-1,1)
 \end{cases}
\end{align}
for $a,b\in ℝ$, where $f$ can be chosen as $f(u)= u$, $u\in ℝ$, for example. 

While all of its solutions remain bounded, comparison arguments combined with travelling wave solutions show 
(see \cite[Sec. IV.41]{QS} for a different argument) that for suitable $a,b\in ℝ$ interior gradient blow-up can happen.
 By this we mean that there exist $x_0\in (-1,1)$, $T>0$ and sequences $(x_k)_{k\in ℕ}\to x_0$,  $(t_k)_{k\in ℕ}\nearrow T$
such that
\[
|u_x(x_k,t_k)|\to \infty \qquad \text{as}\quad k\to\infty.
\]

A natural question that arises is whether the corresponding solutions can be extended in some sense for $t>T$ and, perhaps, even may become classical solutions again. 

In order to shed light onto this issue, it turns out to be helpful to consider the problem in different coordinates and to investigate the 
function $x$ with $x(\cdot,t)$ being the inverse of $u(\cdot,t)$. (We will go into some detail deriving the equivalent initial boundary value  problem in Appendix \ref{sec:equivalence}.)

This leads us to be interested in the possibly degenerate problem
\begin{equation}\label{xeq}
 \begin{cases} 
    x_t=\displaystyle\f{x_{uu}-f(u)}{x_u^2}& \text{in } (a,b)\times(0,∞), \\
    x(a,t)=-1,\quad x(b,t)=1,& t\in (0,\infty),\\
    x(\cdot,0)=x_0& \text{in } [a,b],
 \end{cases}
\end{equation}
and our first result is concerned with its global solvability for $f\in C^2(ℝ)$.

Under the following conditions on the initial data
\begin{equation}\label{cond:x0}
 \begin{cases} x_0\in C^{2+β}([a,b]) \qquad \text{for some } β\in(0,1),\\
  x_0(a)=-1,\qquad x_0(b)=1,\\
  {x_0}_{uu}(a)=a, \qquad {x_0}_{uu}(b)=b,\\
  {x_0}_u>0 \qquad \text{in } [a,b],
 \end{cases}
\end{equation}
we will obtain:  
 
\begin{thm}\label{thm:existence}
 Let $a,b\in ℝ$ with $a<b$, $f\in C^2(ℝ)$ and let $x_0$ be as in \eqref{cond:x0}. Then there is a function $x\in C^0([a,b]\times[0,∞))$ which satisfies 
 \begin{align*}
  x\in C^0([0,∞);C^{1+α}([a,b])), \quad x_{uu}\in L^p_{loc}([a,b]\times[0,∞)), \quad 
  x_t\in L^{∞}((a,b)\times(0,∞))
 \end{align*}
  for any $p\in(1,∞)$ and $α\in(0,1)$ and satisfies \eqref{xeq} in a weak sense (see \eqref{equation:weakly} for a precise formulation).
\end{thm}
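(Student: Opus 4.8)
\emph{Sketch of proof.} The plan is to approximate \eqref{xeq} by non-degenerate problems, to derive a priori bounds that are uniform both in the approximation parameter and in time, and to pass to the limit. For $ε\in(0,1)$ let $x^{ε}$ solve the uniformly parabolic quasilinear problem obtained by replacing $x_u^{2}$ with $x_u^{2}+ε$ in the diffusivity,
\begin{equation*}
 x^{ε}_t=\f{x^{ε}_{uu}-f(u)}{(x^{ε}_u)^{2}+ε}\ \text{ in }(a,b)\times(0,∞),\qquad x^{ε}(a,\cdot)=-1,\quad x^{ε}(b,\cdot)=1,\quad x^{ε}(\cdot,0)=x_0 .
\end{equation*}
Since $f\in C^{2}$ and $x_0$ fulfils \eqref{cond:x0}, classical quasilinear parabolic theory yields a unique local-in-time solution in $C^{2+β,1+β/2}$ (in particular with $x^{ε}_t$ continuous up to $t=0$, by the compatibility conditions in \eqref{cond:x0}); its global existence will follow from the uniform estimates below.

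The decisive structural fact is that \eqref{xeq} possesses a Lyapunov functional. Multiplying the $ε$-equation by $x^{ε}_t\kl{(x^{ε}_u)^{2}+ε}$, integrating over $(a,b)$, and integrating by parts (the boundary terms drop since $x^{ε}_t(a,\cdot)=x^{ε}_t(b,\cdot)=0$) gives
\begin{equation*}
 \ddt\kl{\f12\iab (x^{ε}_u)^{2}\du+\iab f(u)\,x^{ε}\du}=-\iab (x^{ε}_t)^{2}\kl{(x^{ε}_u)^{2}+ε}\du\le 0 .
\end{equation*}
Because $x^{ε}(a,t)=-1$, one has $\norm[\Labi]{x^{ε}(\cdot,t)}\le 1+\sqrt{b-a}\,\norm[\Lab{2}]{x^{ε}_u(\cdot,t)}$; inserting this into the monotone quantity and absorbing closes an a priori estimate, yielding
\begin{equation*}
 \norm[\Lab{2}]{x^{ε}_u(\cdot,t)}\le C_0,\qquad \norm[\Labi]{x^{ε}(\cdot,t)}\le C_1\qquad\text{for all }t\ge 0,\ ε\in(0,1),
\end{equation*}
with $C_0,C_1$ independent of $ε$ and $t$. (The apparent circularity --- the energy identity looks as though it needs an a priori sup-bound on $x^{ε}$ to control $\iab f\,x^{ε}\du$ --- is broken precisely by this use of the Dirichlet data.)

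Next I would bound $x^{ε}_t$: it satisfies a parabolic equation without zeroth-order term and with homogeneous Dirichlet data $x^{ε}_t(a,\cdot)=x^{ε}_t(b,\cdot)=0$ (equivalently, $x^{ε}(\cdot,\cdot+h)$ and $x^{ε}(\cdot,\cdot)$ can be compared via the comparison principle for this degenerate-elliptic-in-space nonlinearity, and $h$ sent to $0$), so $\norm[\Labi]{x^{ε}_t(\cdot,t)}$ is non-increasing in $t$; and at $t=0$ the equation forces $x^{ε}_t(\cdot,0)=\kl{(x_0)_u^{2}+ε}^{-1}\kl{(x_0)_{uu}-f}$, bounded independently of $ε$ because $(x_0)_u>0$ on $[a,b]$. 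Hence $x^{ε}_t$ is bounded in $L^{∞}((a,b)\times(0,∞))$ by a constant $M_0$ uniform in $ε$. The crux is now the spatial regularity. Rearranging the equation as $x^{ε}_{uu}=x^{ε}_t\kl{(x^{ε}_u)^{2}+ε}+f(u)$ gives $\norm[\Lab{1}]{x^{ε}_{uu}(\cdot,t)}\le M_0\kl{C_0^{2}+(b-a)}+\norm[\Lab{1}]{f}$, so $x^{ε}_u(\cdot,t)\in BV((a,b))$ with total variation bounded uniformly in $ε$ and $t$; with the $\Lab{2}$-bound and the one-dimensional estimate $\norm[\Labi]{g}\le(b-a)^{-1/2}\norm[\Lab{2}]{g}+\mathrm{Var}(g)$ this yields a uniform bound on $\norm[\Labi]{x^{ε}_u(\cdot,t)}$ and then, again from the rearranged equation, on $\norm[\Labi]{x^{ε}_{uu}(\cdot,t)}$. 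I expect this step to be the main obstacle: since no boundary condition is imposed on $x^{ε}_u$ at $u=a,b$, a direct maximum-principle bound for $x^{ε}_u$ is unavailable --- its equation contains the uncontrolled forcing $-\kl{(x^{ε}_u)^{2}+ε}^{-1}f'(u)$ --- and one must route through the one-dimensional embedding $BV\hookrightarrow L^{∞}$, driven by the $L^{∞}$-bound on $x^{ε}_t$.

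These uniform bounds, together with parabolic Schauder estimates for the non-degenerate equation, exclude finite-time blow-up, so each $x^{ε}$ is global; and interpolating $\norm[\Labi]{x^{ε}(\cdot,t)-x^{ε}(\cdot,s)}\le M_0|t-s|$ against the uniform $W^{2,∞}((a,b))$-bound makes $\{x^{ε}\}$ and $\{x^{ε}_u\}$ equicontinuous on $[a,b]\times[0,T]$ for every $T$. By Arzelà--Ascoli and a diagonal argument, along some sequence $ε_j→0$ one has $x^{ε_j}→x$ and $x^{ε_j}_u→x_u$ locally uniformly on $[a,b]\times[0,∞)$, while $x^{ε_j}_t→x_t$ and $x^{ε_j}_{uu}→x_{uu}$ weak-$*$ in $L^{∞}$. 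Passing to the limit in $x^{ε}_t\kl{(x^{ε}_u)^{2}+ε}=x^{ε}_{uu}-f(u)$ --- a product of a weak-$*$ convergent factor and a uniformly convergent one --- gives $x_t x_u^{2}=x_{uu}-f(u)$, i.e.\ \eqref{equation:weakly}. The asserted regularity is read off from the uniform estimates (which in fact yield $x_{uu}\in L^{∞}$, hence $x_{uu}\in L^{p}_{\mathrm{loc}}$ for all $p$), and $x\in C^{0}([0,∞);C^{1+α}([a,b]))$ follows by interpolating $\norm[\Labi]{x(\cdot,t)-x(\cdot,s)}\le M_0|t-s|$ against the uniform bound for $\norm[W^{2,∞}((a,b))]{x(\cdot,t)}$.
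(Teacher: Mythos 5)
Your proposal is correct and follows essentially the same route as the paper: regularize with $(x_u^2+\varepsilon)$, obtain uniform $H^1$/$L^\infty$ bounds from the energy functional $\frac12\int x_u^2+\int f x$ combined with the Dirichlet data, bound $x_t$ in $L^\infty$ by the maximum principle for its (zeroth-order-free) linear equation, bootstrap the identity $x_{uu}=x_t(x_u^2+\varepsilon)+f$ through a one-dimensional embedding, and pass to the limit by compactness plus weak-$*$ convergence of $x_t$ against the uniformly convergent $x_u^2$. The only (harmless) deviations are that you push the bootstrap to a uniform $W^{2,\infty}$ bound via $W^{1,1}((a,b))\hookrightarrow L^\infty((a,b))$, where the paper stops at $W^{2,p}$ for finite $p$, and that you use Arzel\`a--Ascoli with a time-Lipschitz/interpolation argument in place of the Aubin--Lions lemma.
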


We can already note that -- in contrast to the solutions of the original problem \eqref{ueq} -- these solutions are global. This already answers the question about extensibility of solutions to \eqref{ueq}, with ``in a suitable sense'' being the change to \eqref{xeq}. 
We can even reveal the long time behaviour of these functions: 

\begin{thm}\label{thm:longterm}
 Let $a,b\in ℝ$ with $a<b$, $f\in C^2(ℝ)$ and let $x_0$ be as in \eqref{cond:x0}. 
 Then for the solution $x$ to \eqref{xeq} from Theorem~\ref{thm:existence} it holds that 
 \begin{equation}\label{eq:thm:conv}
  \lim_{t\to \infty} x(\cdot,t) = x^{∞}\qquad \text{ in } C^1([a,b]), 
 \end{equation}
 where $x^{∞}\in C^2([a,b])$ is defined by 
 \begin{equation}\label{def:xinfty-general}
  x^{∞}_{uu}(u):= f(u)\;\; \text{ for } u\in (a,b), \qquad x^{∞}(a)=-1, \qquad x^{∞}(b)=1.
  %\f{u^3}6+\f2{b-a} u - \f{b^2+ab+a^2}6 u -1-\f{2a}{b-a}+\f{ab^2+a^2b}6, \qquad u\in[a,b]. 
 \end{equation}
\end{thm}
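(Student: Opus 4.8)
The plan is to establish convergence via a Lyapunov functional argument, exploiting the gradient-flow-like structure that the $x$-equation inherits from being (essentially) the graph/level-set reformulation of a semilinear problem. First I would look for an energy $E[x]$ that is dissipated along solutions of \eqref{xeq}; a natural candidate, given the structure $x_t = (x_{uu}-f(u))/x_u^2$, is something like $E[x] = \iab \left(\log x_u + F(u)\,\text{(a term involving }x\text{)}\right)\du$ with $F'' = f$, chosen so that $\frac{d}{dt}E[x(\cdot,t)] = -\iab x_u^2 \,x_t^2\,\du \le 0$ (up to the right weight). The critical points of $E$ subject to the Dirichlet conditions $x(a)=-1$, $x(b)=1$ should be exactly the stationary solutions, i.e. functions with $x_{uu}=f(u)$ on $(a,b)$ — which is precisely $x^{∞}$ as defined in \eqref{def:xinfty-general}. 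One should also check that $x^{∞}$ is the \emph{unique} stationary solution: since $x^{∞}_{uu}=f(u)$ is an explicit ODE with two-point boundary conditions and $f\in C^2$, the solution is uniquely determined by integrating twice and fixing the two constants by $x^{∞}(a)=-1$, $x^{∞}(b)=1$; note the third condition in \eqref{cond:x0} (${x_0}_{uu}(a)=a$, etc.) is automatically consistent since $f$ is such that blow-up corresponds to $f(a)=a$ in the prototype — but in general one only needs $x^{∞}\in C^2$, which follows from $f\in C^2$.

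Next, I would combine the a priori bounds from Theorem~\ref{thm:existence} — namely $x\in C^0([0,∞);C^{1+α}([a,b]))$, $x_{uu}\in L^p_{loc}$, and crucially $x_t\in L^∞((a,b)\times(0,∞))$ — with the energy dissipation to control the long-time behavior. The $L^∞$ bound on $x_t$ together with the bound on $x_u$ (and a lower bound $x_u>0$, which should persist or at least not degenerate in a way that breaks the argument — this needs care) gives, via the dissipation identity, that $\int_0^∞ \iab x_u^2 x_t^2 \,\du\,\dt < ∞$. Hence there is a sequence $t_k\to∞$ along which $\iab x_u^2 x_t^2\,\du \to 0$, and by the uniform $C^{1+α}$ bound and Arzelà–Ascoli, a subsequence of $x(\cdot,t_k)$ converges in $C^1([a,b])$ to some limit $x_*$; passing to the limit in the weak formulation \eqref{equation:weakly} shows $x_*$ is a stationary weak solution, hence $x_* = x^{∞}$ by uniqueness. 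To upgrade convergence along a sequence to convergence of the full trajectory as $t\to∞$, I would use that $E[x(\cdot,t)]$ is nonincreasing and bounded below (bounded below because $x_u$ stays bounded above and, hopefully, below away from $0$), so $E[x(\cdot,t)]\to E[x^{∞}]$; then a standard argument — if the trajectory did not converge, there would be two distinct subsequential limits both equal to $x^{∞}$, contradiction, provided one knows every subsequential limit is a stationary solution, which the dissipation integral being finite guarantees for a.e.\ sequence and a compactness/continuity argument extends to all sequences.

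The main obstacle I anticipate is the \emph{degeneracy}: controlling $x_u$ from below away from zero uniformly in time. If $x_u$ can approach $0$ somewhere as $t\to∞$, the weight $x_u^2$ in the dissipation integral degenerates, the energy may fail to be bounded below (the $\log x_u$ term $\to -∞$), and the equation itself becomes singular, so passing to the limit in the weak formulation is delicate. I would expect this to be handled either by a maximum-principle / comparison argument showing $x_u$ stays bounded below by a positive constant for all time (perhaps using that $x^{∞}_u>0$ and that the initial data satisfies ${x_0}_u>0$, combined with the parabolic structure), or by working directly with the weak formulation and the available $L^p_{loc}$ bound on $x_{uu}$ to extract limits without needing the energy to be coercive. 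A secondary technical point is justifying the dissipation identity rigorously at the level of the weak solution (differentiating $E$ along a merely $L^p_{loc}$-regular-in-$x_{uu}$ solution), which would likely be done by first establishing the identity for a smooth approximating sequence — the same one used to construct the solution in Theorem~\ref{thm:existence} — and then passing to the limit, keeping the inequality $E[x(\cdot,t)] + \int_s^t \iab x_u^2 x_\tau^2 \le E[x(\cdot,s)]$ which is all that is actually needed.
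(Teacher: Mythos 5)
Your overall strategy --- a dissipated energy, a sequence of times along which the dissipation vanishes, identification of the limit as the unique stationary state, and an upgrade to full convergence --- is exactly the paper's. But your choice of Lyapunov functional creates a spurious obstacle that then derails the rest of the plan. The functional that actually satisfies $\ddt E[x(\cdot,t)]\le -\iab x_u^2x_t^2$ is not a $\log x_u$--type entropy but simply
\[
 \calF(x)=\f12\iab x_u^2\du+\iab f(u)\,x(u)\du ,
\]
as one sees by integrating by parts and substituting $x_{uu}-f(u)=x_u^2x_t$: $\ddt\calF=\iab x_ux_{ut}+\iab fx_t=-\iab(x_{uu}-f)x_t=-\iab x_u^2x_t^2$ (with $\le$ after passing from the regularization $x_u^2+ε$ to the limit). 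This $\calF$ is coercive and bounded below on $X$ by a Poincar\'e-type inequality, with no reference to $x_u$ being positive. Consequently the entire ``main obstacle'' paragraph --- the energy being unbounded below as $x_u\to 0$, the need for a uniform lower bound $x_u\ge c>0$ --- is an artifact of the wrong functional. Worse, the fix you propose (a maximum-principle argument giving $x_u$ bounded below away from zero for all time) is provably false in the settings the paper is interested in: Theorems~\ref{thm:special} and \ref{thm:bdry} exhibit initial data for which $x_u$ reaches zero in finite time, and indeed $x^{∞}_u$ itself changes sign there. Any proof of Theorem~\ref{thm:longterm} that requires nondegeneracy of $x_u$ cannot be correct. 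With the quadratic $\calF$, the dissipation inequality (justified, as you say, by approximation through the regularized problems) gives $\int_0^{∞}\iab x_u^2x_t^2<∞$; combined with $\iab x_u^2\le C$ and H\"older this yields $\int_0^{∞}\bigl(\iab|x_{uu}-f|\bigr)^2\dt<∞$, so along some $τ_k\to∞$ one has $x_{uu}(\cdot,τ_k)\to f$ in $L^1$, and Arzel\`a--Ascoli identifies the limit as $x^{∞}$ directly, without passing to the limit in the weak formulation.

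Your upgrade to full convergence is also under-specified: as written (``two distinct subsequential limits both equal to $x^{∞}$, contradiction'') it is garbled, and the claim that finiteness of the dissipation integral makes \emph{every} subsequential limit stationary is not justified --- it only produces \emph{one} good sequence. The clean mechanism, which is the one the paper uses, is that $x^{∞}$ is the \emph{unique global minimizer} of the convex functional $\calF$ on $X$; since $t\mapsto\calF(x(\cdot,t))$ is nonincreasing and $\calF(x(\cdot,τ_k))\to\calF(x^{∞})=\min_X\calF$, every subsequential $C^1$-limit $\xhat$ of $x(\cdot,t)$ as $t\to∞$ satisfies $\calF(\xhat)\le\calF(x^{∞})$ and hence equals $x^{∞}$. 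You have the ingredients (monotone energy, compactness, uniqueness of the stationary state) but not the argument that assembles them; make the minimizing property of $x^{∞}$, rather than mere stationarity, the pivot.
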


\begin{example}
 If $f(u)=u$, then 
 \begin{equation}\label{def:xinfty-example}
  x^{∞}(u):= \f{u^3}6+\f2{b-a} u - \f{b^2+ab+a^2}6 u -1-\f{2a}{b-a}+\f{ab^2+a^2b}6, \qquad u\in[a,b]. 
 \end{equation} 
\end{example}

Let us note that for $f\equiv 0$ the homogeneous Dirichlet and Neumann problems corresponding to 
the PDE from \eqref{xeq} were studied in \cite{CGK}. The solutions from \cite{CGK}  instantly become constant, no matter which initial data we impose. This is in contrast with Theorem~\ref{thm:existence} where nonhomogeneous Dirichlet boundary conditions are considered.\\ 

In order to address the second part of the question above -- whether these, now extended, solutions become solutions to the original problem again -- and to answer it by ``not necessarily'', let us turn our attention to a more specific, symmetric setting. 

We will assume that now  $-a=b\in(0,∞)$, $f(u)=u$, and that the initial data $x_0$, in addition to \eqref{cond:x0}, also satisfy  
\begin{equation}\label{cond:x0:special}
 \begin{cases}
  x_0(u)=-x_0(-u)\qquad &\text{ for all } u\in (-b,b),\\
  x_{0uu}(u)\le u\qquad &\text{ for all } u\in(0,b).
 \end{cases}
\end{equation}

In this situation we show that if the construction from Appendix \ref{sec:x-to-u} does not work at some time $t_0$, it will never again be applicable; and that furthermore such time does exist if $b$ is large enough: 

\begin{thm}\label{thm:special}
 Let $-a=b\in(\sqrt[3]{6},∞)$, $f(u)=u$ for every $u\in ℝ$, and assume that $x_0$ satisfies \eqref{cond:x0} and \eqref{cond:x0:special}. Let $x$ be the solution to \eqref{xeq} from Theorem~\ref{thm:existence}. Then there is $t_0>0$ such that $x_u>0$ in $[-b,b]\times(0,t_0)$, 
%  \[
%   u(\cdot,t):=[x(\cdot,t)]^{-1}\colon [-1,1]\to [-b,b], \qquad t\in(0,t_0)
%  \]
%  defines a solution to \eqref{ueq} in $[-1,1]\times[0,t_0)$, 
 but $x(\cdot,t)$ has no differentiable inverse for any $t\ge t_0$. \\
 Moreover, for large $t$, the image of $[0,b]$ under $x(\cdot,t)$ contains negative elements. 
\end{thm}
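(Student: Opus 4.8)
The plan is to exploit the symmetry and the sign condition $x_{0uu}\le u$ on $(0,b)$ to track the evolution of $w := x_u$ (or, more precisely, of $v := x_{uu} - u = x_{uu} - f(u)$, the numerator in \eqref{xeq}). First I would observe that by the oddness in \eqref{cond:x0:special} the solution $x(\cdot,t)$ is odd for all $t$, so $x(0,t)=0$ and $x_{uu}(0,t)=0$ for every $t$; in particular $x^{∞}$ from \eqref{def:xinfty-example} is odd and, for $b>\sqrt[3]{6}$, one checks directly that $x^{∞}_u(0) = \tfrac{2}{b-a} - \tfrac{b^2+ab+a^2}{6} = \tfrac1b - \tfrac{b^2}{6} < 0$. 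Since $x(\cdot,t)\to x^{∞}$ in $C^1([-b,b])$ by Theorem~\ref{thm:longterm}, it follows that $x_u(0,t)<0$ for all sufficiently large $t$; combined with $x_u(0,0)=x_{0u}(0)>0$ and continuity of $t\mapsto x_u(0,t)$ (which is part of the regularity $x\in C^0([0,\infty);C^{1+\alpha})$ in Theorem~\ref{thm:existence}), there is a first time $t_0>0$ at which $x_u(0,\cdot)$ vanishes. Before $t_0$ I must still rule out that $x_u$ vanishes somewhere else in $[-b,b]$; here I would use a comparison/maximum-principle argument for the (weak) equation satisfied by $x_u$, or argue that $x_u>0$ on $(-b,b)$ is preserved as long as it holds on a neighbourhood of $0$, using that the boundary fluxes are controlled and that $x_0$ satisfies $x_{0u}>0$.

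The key structural point is a one-sided estimate propagating $x_{uu}\le u$: formally, $v=x_{uu}-u$ satisfies a linear parabolic equation obtained by differentiating \eqref{xeq} twice in $u$ (with coefficients depending on $x_u$), and since $v\le 0$ at $t=0$ by \eqref{cond:x0:special}, and $v=0$ on the relevant boundary pieces by the Neumann-type conditions $x_{0uu}(\pm b)=\pm b$ together with $x^{∞}_{uu}=f$, a maximum principle gives $x_{uu}\le u$ for all $t>0$ on $(0,b)$ (and $\ge u$ on $(-b,0)$ by oddness). This monotonicity is what forces the degeneration to be irreversible: once $x_u(x^*,t_0)=0$ at some point $x^*$ (by symmetry we may take $x^* = 0$, or more generally the first zero), the structure of the equation near a zero of $x_u$ — together with $x_{uu}\le u$ controlling the curvature — prevents $x_u$ from becoming positive again on all of $[-b,b]$. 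Concretely I would show that $\phi(t) := \min_{u\in[-b,b]} x_u(u,t)$, once it reaches $0$, stays $\le 0$: at an interior minimum $u^*$ of $x_u(\cdot,t)$ with value $0$ one has $x_{uu}(u^*,t)=0$ and $x_{uuu}(u^*,t)\ge 0$, and feeding this into the (differentiated) equation for $x_u$ shows $\partial_t x_u(u^*,t)\le 0$ up to terms that the sign condition controls; hence $x_u$ cannot cross back to being strictly positive everywhere, so $x(\cdot,t)$ has no differentiable (monotone) inverse for any $t\ge t_0$.

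For the final sentence, I would use the $C^1$-convergence to $x^{∞}$ more quantitatively. Since $x^{∞}$ is odd with $x^{∞}_u(0)<0$, near $u=0$ we have $x^{∞}(u)<0$ for small $u>0$, so $x^{∞}([0,b])$ contains an interval of negative numbers; by uniform convergence $x(\cdot,t)\to x^{∞}$, the image $x([0,b],t)$ contains negative elements for all large $t$. (One small check: the minimum of $x^{∞}$ over $[0,b]$ is attained at an interior point where $x^{∞}_u=0$, i.e. at $u=\sqrt{b^2/6 - 1/b}\cdot\text{(appropriate constant)}$ — in any case strictly inside $(0,b)$ when $b>\sqrt[3]{6}$ — and the value there is negative; I would just exhibit one $u>0$ with $x^{∞}(u)<0$, e.g. any small $u$.)

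**Main obstacle.** The hard part will be making the maximum-principle arguments rigorous for the \emph{weak} solution of Theorem~\ref{thm:existence}, whose regularity is only $x\in C^0([0,\infty);C^{1+\alpha})$ with $x_{uu}\in L^p_{loc}$ — in particular $x_u$ need not be classically differentiable in $t$, so the pointwise arguments above for $v=x_{uu}-u$ and for $\min x_u$ must be replaced by Stampacchia-type testing of the weak formulation \eqref{equation:weakly} (e.g. testing with $(v)_+$ or with truncations of $x_u$), and one must be careful that the coefficient $x_u^{-2}$ stays bounded exactly on the time interval where $x_u>0$, which is the interval one is trying to control — so the comparison must be run as a continuity/bootstrap argument up to the first degeneration time $t_0$. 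The irreversibility claim for $t\ge t_0$ is where the sign condition \eqref{cond:x0:special} is essential and where the argument is genuinely nontrivial: one needs that the combination of oddness, $x_{uu}\le u$ on $(0,b)$, and the boundary conditions rules out $x_u$ returning to being everywhere positive, and I expect this to require a careful quantitative version of the degenerate maximum principle near the vanishing set of $x_u$.
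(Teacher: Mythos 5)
Your overall skeleton (oddness, the computation $x^{\infty}_u(0)=\frac{6-b^3}{6b}<0$, Theorem~\ref{thm:longterm} forcing a first degeneration time $t_0>0$, and the final claim about negative values in the image of $[0,b]$) matches the paper, and you correctly sense that \eqref{cond:x0:special} is what makes the degeneration irreversible. But the two steps that carry the real content are left as programmes that, as sketched, would not go through. First, locating the first zero of $x_u$ at $u=0$: oddness of $x$ only makes $x_u$ even, so zeros a priori come in pairs $\pm u^*$ with $u^*$ possibly nonzero; ``by symmetry we may take $x^*=0$'' is not justified. The paper's Lemma~\ref{lem:firstatzero} settles this by a second-derivative test on $y:=x_\varepsilon-\eta u$ at the level of the \emph{regularized} solutions: at any critical point $u_0$ of $y$ one has $y_{uu}(u_0)=f(u_0)+(\eta^2+\varepsilon)x_{\varepsilon t}(u_0,t)>0$ for small $\eta,\varepsilon$, thanks to $f>0$ on $(0,b]$ and the uniform bound \eqref{bound:xet}, so every critical point would be a strict local minimum, which is impossible; hence $x_u(\cdot,t)>0$ on $(0,b]$ whenever $x_u(0,t)>0$. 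Nothing playing this role appears in your sketch.

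Second, and more seriously, the irreversibility. Your plan --- propagate $x_{uu}\le u$ by a maximum principle for the twice-$u$-differentiated equation and then argue pointwise at an interior minimum of $x_u$ using $x_{uuu}(u^*,t)\ge 0$ --- cannot be run on the weak solution: $x_{uu}$ is only in $L^p_{loc}$, $x_{uuu}$ does not exist, and the differentiated equation carries coefficients like $x_u^{-3}$ that blow up exactly at the zero of $x_u$ you want to analyse. You acknowledge this obstacle but offer no way around it. The paper's resolution is elementary and avoids any equation for $x_u$ or $x_{uu}$: by \eqref{cond:x0:special}, $y_\varepsilon:=x_{\varepsilon t}$ has nonpositive initial data on $(0,b)$, vanishes at $u=0$ (oddness) and at $u=b$, and satisfies a linear parabolic equation with no zeroth-order term (obtained by differentiating \eqref{xepseq} in $t$, not in $u$), so comparison with the supersolution $0$ gives $x_{\varepsilon t}\le 0$ on $(0,b)$, i.e.\ $t\mapsto x(u,t)$ is nonincreasing for each fixed $u\in(0,b)$. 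Since $x(0,t)\equiv 0$, the difference quotients $x(u,t)/u$ are nonincreasing in $t$, hence so is $x_u(0,t)=\lim_{u\searrow 0}x(u,t)/u$ --- no time-regularity of $x_u$ and no maximum principle at the degenerate point are needed. Once $x_u(0,t_0)=0$, this monotonicity yields $x_u(0,t)\le 0$ for all $t\ge t_0$, and a function on $[-b,b]$ with $x(-b,t)<x(b,t)$ and $x_u(0,t)\le 0$ has no differentiable inverse. I recommend restructuring your argument around the regularized problems, where all comparison arguments are classical, and looking for quantities monotone in time (here $x(u,\cdot)$ itself) rather than trying to control $x_{uu}$ or $\min x_u$ directly.
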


In terms of $u$, the scenario of Theorem~\ref{thm:special} corresponds to blow-up of the gradient $u_x$ in the interior of the domain (more precisely: at $x=0$).\\

% 
% In a symmetric situation as this, where the degeneracy in the equation for $x$, and hence the blow-up of $u_x$, is known to arise at $0$, one might expect that the interior gradient blow-up of \eqref{ueq} ``in reality'' were a boundary blow-up, if one only considers $(0,1)$ instead of $(-1,1)$ as spatial domain for $u$. 
% 
% However, the last part of Theorem~\ref{thm:special} illustrates that if we were to extend the construction of Section \ref{sec:x-to-u} to times after $t_0$, by interpreting $u$ as a multi-valued function, then $u(\cdot,t)$ for $t>t_0$ should be defined on a larger interval than $(0,1)$ and that thus this line of reasoning at least does not agree with the extensions of solutions as presented before. \red{[hier bräuchte ich eine gute Referenz, aber eine solche Aussage wäre schön:] Let us note in particular that extending solutions exhibiting boundary gradient blow-up   leads / tends to lead to functions being interval-valued at the boundary, but -- of course -- defined on the same spatial domain as before.}
% 
% \textit{Gradient blow-up on the boundary and in the interior are (or at least can be) substantially different.}\\
% 

\textbf{Boundary gradient blow-up.} 
As far as we know, the first example of gradient blow-up can be found in \cite{F}. For other early examples, which appeared around three decades later, we refer to \cite{CG,D,FL,K1,K2}. The spatial profile near a blow-up point on the boundary was studied in \cite{CG,FL,PS3} and the blow-up set in \cite{LS,PS3}. Results on the blow-up rate (in time) were established in \cite{CG,GH,PS1,ZL} and on continuation after blow-up in \cite{BD,FL,FTW,K2,PS3,PS2,PZ,QR}.

Our approach yields a continuation beyond gradient blow-up on the boundary which is completely different.

\begin{thm}\label{thm:bdry}
 Let $a=0$, $b\in (2,∞)$, $f\equiv 1$ and let $x_0$ be as in \eqref{cond:x0}. Then the global solution to \eqref{xeq} from Theorem~\ref{thm:existence} is such that 
 \[
  t_0:=\inf\set{t>0 \mid \exists u\in[0,b]: x_u(u,t)=0} \in (0,∞)
 \]
 and
 \begin{equation}\label{eq:nullstellevonxu}
  x_u(0,t_0)=0.
 \end{equation}
 Moreover, for large $t$, the image of $[0,b]$ under $x(\cdot,t)$ contains negative elements. 
\end{thm}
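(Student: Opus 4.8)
\emph{Proof proposal.} The argument is driven by the explicit limit profile. With $a=0$ and $f\equiv 1$, relations \eqref{def:xinfty-general} give $x^{\infty}(u)=\f{u^2}2+\kl{\f2b-\f b2}u-1$ and $x^{\infty}_u(u)=u+\f2b-\f b2$, so $x^{\infty}_u(0)=\f2b-\f b2<0$ precisely because $b>2$, and $x^{\infty}$ attains its minimum over $[0,b]$ at the interior point $u_{\min}:=\f b2-\f2b\in(0,b)$, with value $x^{\infty}(u_{\min})=-1-\f{u_{\min}^2}2<-1$. By Theorem~\ref{thm:longterm}, $x(\cdot,t)\to x^{\infty}$ in $C^1([0,b])$, so $m(t):=\min_{u\in[0,b]}x_u(u,t)$ is continuous with $m(0)=\min_{[0,b]}{x_0}_u>0$ (by \eqref{cond:x0}) and $m(t)\to\min_{[0,b]}x^{\infty}_u\le x^{\infty}_u(0)<0$ as $t\to\infty$. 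Since $\int_0^b x_u(u,t)\du=x(b,t)-x(0,t)=2$, the maximum of $x_u(\cdot,t)$ is always at least $\f2b>0$, so $x_u(u,t)=0$ for some $u\in[0,b]$ if and only if $m(t)\le0$; hence $t_0=\inf\set{t>0\mid m(t)\le0}$, and continuity together with $m(0)>0$ and $m(t)<0$ for large $t$ forces $t_0\in(0,\infty)$, $m>0$ on $[0,t_0)$, and $m(t_0)=0$.

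On $[0,b]\times(0,t_0)$ we have $x_u\ge m(t)>0$ and $x_u$ bounded, so the equation in \eqref{xeq} is uniformly parabolic there with Hölder coefficients (from $x_u\in C^{\alpha}$ in $u$, Lipschitz in $t$ as $x_t\in\Labi$), and a standard parabolic bootstrap upgrades the weak solution of Theorem~\ref{thm:existence} to a classical, indeed smooth, one up to the lateral boundary for $t\in(0,t_0)$; the incompatibility of \eqref{cond:x0} with $f\equiv1$ at the corners $(0,0),(b,0)$ is harmless once $t>0$. Consequently $v:=x_u$ solves $v_t=\f{v_{uu}}{v^2}-\f{2v_u(v_u-1)}{v^3}$ classically on $[0,b]\times(0,t_0)$, and evaluating \eqref{xeq} at $u\in\set{0,b}$, where $x_t=0$ because $x(0,\cdot)\equiv-1$ and $x(b,\cdot)\equiv1$, yields $x_{uu}(0,t)=x_{uu}(b,t)=1$, i.e.\ $v_u(0,t)=v_u(b,t)=1$ for all $t\in(0,t_0)$.

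The core is \eqref{eq:nullstellevonxu}: that the spatial minimum of $v$, which first reaches the value $0$ at time $t_0$, does so at $u=0$. Since $v_u(b,t)=1>0$, the map $u\mapsto v(u,t)$ is strictly increasing near $u=b$, so its minimum over $[0,b]$ is never attained at $u=b$ for $t<t_0$; it is therefore attained at $u=0$ or at an interior point $u(t)$, where $v_u(u(t),t)=0$ and $v_{uu}(u(t),t)\ge0$. If some minimum is attained at $u=0$ along a sequence $t_n\nearrow t_0$, then $v(0,t_0)=\lim_n v(0,t_n)=\lim_n m(t_n)=m(t_0)=0$ and we are done. Otherwise there is $\eta>0$ such that for every $t\in(t_0-\eta,t_0)$ all minima of $v(\cdot,t)$ are interior; there the term $-2v_u(v_u-1)/v^3$ vanishes, so $v_t(u(t),t)=v_{uu}(u(t),t)/v(u(t),t)^2\ge0$. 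From $m(t+h)-m(t)\ge v(u(t+h),t+h)-v(u(t+h),t)=\int_t^{t+h}v_t(u(t+h),s)\ds$ and passing to a subsequential limit of the minimizers $u(t+h)$ (recalling that limit points of minimizers are again minimizers, hence interior), one gets that the lower right Dini derivative of $m$ is nonnegative on $(t_0-\eta,t_0)$; as a continuous function with nonnegative lower right Dini derivative is nondecreasing, $m$ is nondecreasing there, contradicting $m>0$ on $(t_0-\eta,t_0)$ and $m(t_0)=0$. Hence $v(0,t_0)=0$.

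For the final assertion, $x(0,t)=-1$ for all $t$, so the image of $[0,b]$ under $x(\cdot,t)$ always contains the negative number $-1$; more substantially, $x(u_{\min},t)\to x^{\infty}(u_{\min})<-1$ as $t\to\infty$ by Theorem~\ref{thm:longterm}, so for large $t$ this image even contains values below $-1$. I expect the two technical pillars of the middle step to be the main obstacles: promoting the weak solution of Theorem~\ref{thm:existence} to a classical one up to the lateral boundary on $[0,b]\times(0,t_0)$ so that the identities $v_u(0,t)=v_u(b,t)=1$ hold pointwise (despite the corner incompatibility), and the careful Dini-derivative bookkeeping that turns ``$v_t\ge0$ at an interior spatial minimum'' into ``$m$ is nondecreasing'', including the exclusion of $u=b$ as the location of the minimum.
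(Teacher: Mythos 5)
Your argument reaches the right conclusions, but for the key identity \eqref{eq:nullstellevonxu} it takes a genuinely different and much heavier route than the paper. The paper obtains $x_u(0,t_0)=0$ from Lemma~\ref{lem:firstatzero} applied with $\alpha=0$: that lemma works entirely at the level of the regularized classical solutions $x_{\varepsilon}$ and shows, via the elementary observation that every critical point of $y=x_{\varepsilon}(\cdot,t)-\eta u$ in $(\delta,b)$ would have to be a local minimum (since $y_{uu}=f+(\eta^2+\varepsilon)x_{\varepsilon t}>0$ there, by the uniform bound \eqref{bound:xet}), that $x_u(0,t)>0$ forces $x_u(\cdot,t)>0$ on all of $[0,b]$ -- so no regularity of the limit beyond $C^1$ is ever needed. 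You instead track the moving spatial minimum of $v=x_u$ and run a Dini-derivative argument; the individual steps there (the sign $v_t=v_{uu}/v^2\ge 0$ at an interior minimum, the exclusion of $u=b$ via $v_u(b,t)=1$, and passing from a nonnegative lower right Dini derivative to monotonicity of $m$) are correct, and your treatment of $t_0\in(0,\infty)$ via $m(t)=\min_u x_u(u,t)$ and of the last assertion via $x^{\infty}(u_{\min})<-1$ coincides in substance with the paper's use of Theorem~\ref{thm:longterm}. (Note that, as literally stated, the ``Moreover'' part is trivial here because $x(0,t)=-1$ for all $t$; the substantive content, which you do supply, is that the image eventually contains values below $-1$.)

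The one real hole is the regularity upgrade that you yourself flag: your core step presupposes that the weak solution of Theorem~\ref{thm:existence} is classical on $[0,b]\times(0,t_0)$ up to the lateral boundary, with $v=x_u$ of class $C^{2,1}$ there, so that the equation for $v$, the boundary identities $v_u(0,t)=v_u(b,t)=1$, and the continuity of $v_t$ all hold pointwise. ``Standard parabolic bootstrap'' is not automatic, because $x$ is only known to satisfy \eqref{equation:weakly}; one would have to prove uniform-in-$\varepsilon$ Schauder estimates for the $x_{\varepsilon}$ on compact subsets of $[0,b]\times(0,T]$, $T<t_0$, using that $x_{\varepsilon u}$ has a positive lower bound there (from \eqref{conv:hoelder} and $m>0$ on $[0,T]$) and that the coefficients $1/(x_{\varepsilon u}^2+\varepsilon)$ are uniformly H\"older in space \emph{and time} (the latter via interpolation between \eqref{bound:xet} and \eqref{bound:xeuCalpha}), then pass to the limit and bootstrap once more to reach $v\in C^{2,1}$. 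This is feasible but is exactly the work that Lemma~\ref{lem:firstatzero} is designed to avoid; as written, your proof asserts rather than establishes it.
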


\begin{remark}
 As $x(0,t_0)=-1$ in the situation of Theorem~\ref{thm:bdry}, this corresponds to gradient blow-up at $x=-1$ (i.e. gradient blow-up on the boundary) in the problem 
 \[
  \begin{cases}
   u_t(x,t)=u_{xx}(x,t) + u_x^3(x,t), &\qquad x\in(-1,1),\; t>0,\\
   u(-1,t)=0, \quad u(1,t)=b,&\qquad t>0,\\
   u(\cdot,0)= x_0^{-1}&\qquad \text{in } (-1,1).
  \end{cases}
 \]
 Since $x$ is globally defined, letting $u(\cdot,t)$ be the (multi-valued) inverse of $x(\cdot,t)$ also for $t\ge t_0$, one obtains a continuation of the solution after gradient blow-up on the boundary. Note that the domain of $u(\cdot,t)$ does not remain restricted to $[-1,1]$; thus this continuation certainly differs from those from \textrm{\cite{BD,FL,FTW,K2,PS3,PS2,PZ,QR}} that do not have this property.
%earlier studies we mentioned before.
\end{remark}

\textbf{Plan of the paper. } In Section \ref{sec:existence} we will prove Theorem~\ref{thm:existence}. We will base our reasoning on the regularized problem 
\begin{align}\label{xepseq}
\begin{cases}
 \xe_t=\displaystyle\f{\xe_{uu}-f(u)}{\xe_u^2+ε}& \text{in }(a,b)\times(0,∞),\\
 \xe(a,t)=-1,\qquad \xe(b,t)=1&\text{for all } t\in(0,∞),\\
 \xe(\cdot,0)=x_0 & \text{in }(a,b),
\end{cases}
\end{align}
and rely on comparison arguments (see Lemma~\ref{lem:bound:xet}) as well as the energy functional (cf. Lemma~\ref{lem:energy}) 
\begin{equation}\label{def:F}
   \calF(x):= \f12 \iab x_u^2(u)\du + \iab f(u)x(u) \du,\qquad x\in X,  
\end{equation}
where 
\begin{equation}\label{def:X}
 X:=\set{ξ\in W^{1,2}((a,b))\mid ξ(a)=-1, ξ(b)=1}, 
\end{equation}
to derive suitable bounds (stated in Lemmata \ref{lem:bound:xet}, \ref{lem:boundsxe:h1andli} and \ref{lem:boundsxe:h2andcalpha}) for a passage to the limit $ε\searrow 0$ along a convenient sequence in Lemma~\ref{lem:conv}. 

Section \ref{sec:longterm} will then be devoted to the proof of Theorem~\ref{thm:longterm} and mainly rely on properties of $\calF$ and consequences of it being an energy functional. 

Section \ref{sec:where} will deal with the question where the derivative of a solution $x$ can vanish and where, hence, the equation may become singular. This information is directly applicable in the proof of Theorem~\ref{thm:bdry}. 

In Section \ref{sec:special}, finally, we will prove Theorem~\ref{thm:special}. Symmetry of the solution and Lemma~\ref{lem:firstatzero}, but also Theorem~\ref{thm:longterm} will be instrumental.

%\red{mindestens: $C^2([a,b])$, ${x_0}_u>0$ (bzw. $\norm[\Labi]{\f{{x_0}_{uu}-u}{{x_0}_u^2}}<\infty$)}
Before we begin, let us add a remark concerning the requirements on the initial data.
\begin{remark}
 At least for Sections \ref{sec:existence}, \ref{sec:longterm} (i.e. for Theorems \ref{thm:existence} and \ref{thm:longterm}), the condition ${x_0}_u>0$ in \eqref{cond:x0} could be replaced by the less natural but also less restrictive condition 
 \[
  \sup_{u\in[a,b]} \betrag{\f{{x_0}_{uu}(u)-f(u)}{{x_0}_u^2(u)}}<∞. 
 \]
\end{remark}

\section{Existence. Proof of Theorem~\ref{thm:existence}}\label{sec:existence}

The starting point of our analysis is an existence result for the regularized problem \eqref{xepseq}.
\begin{lemma}\label{lem:exeps}
 Let $a,b \in ℝ$ with $a<b$, $f\in C^2(ℝ)$ and let $x_0$ be as in \eqref{cond:x0}. Then for every $ε>0$, \eqref{xepseq} has a unique classical solution 
 \begin{equation}\label{regularity:xe}
  \xe \in C^{2,1}([a,b]\times[0,∞))\cap C^{4,2}([a,b]\times(0,∞)).
 \end{equation}
\end{lemma}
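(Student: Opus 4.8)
The equation \eqref{xepseq} is a uniformly parabolic quasilinear problem: since $ε>0$, the coefficient $\f1{\xe_u^2+ε}$ of $\xe_{uu}$ is bounded below by a positive constant as long as $\xe_u$ stays bounded, and the right-hand side $\f{\xe_{uu}-f(u)}{\xe_u^2+ε}$ is smooth in $(u,\xe,\xe_u,\xe_{uu})$ wherever $\xe_u$ is finite (which is automatic here since the principal part is linear in $\xe_{uu}$). So the plan is the standard one for quasilinear parabolic IBVPs: first establish local-in-time existence of a classical solution by a fixed-point/continuation argument from the theory in Ladyzhenskaya--Solonnikov--Uraltseva (or Lunardi), using that $x_0\in C^{2+β}$ satisfies the compatibility conditions $x_0(a)=-1$, $x_0(b)=1$ (zeroth order) and ${x_0}_{uu}(a)=a={x_0}_{uu}(b)\cdot$... — wait, more precisely the first-order compatibility condition at $t=0$ requires $\xe_t(a,0)=0$, i.e. $\f{{x_0}_{uu}(a)-f(a)}{{x_0}_u^2(a)+ε}=0$; the condition ${x_0}_{uu}(a)=a$ in \eqref{cond:x0} is stated for the problem \eqref{xeq} where $f$ is $f(u)=u$-like, but in general \eqref{cond:x0} should be read as ${x_0}_{uu}(a)=f(a)$, ${x_0}_{uu}(b)=f(b)$, which is exactly the compatibility needed. (I would note this is where the hypotheses on $x_0$ are used.) This gives a unique solution on some maximal interval $[a,b]\times[0,T_{\max})$ in $C^{2+β,1+β/2}$, and parabolic Schauder bootstrapping using $f\in C^2$ upgrades it to $C^{4,2}$ for $t>0$.

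The heart of the matter is then the \emph{global} existence, i.e. ruling out $T_{\max}<∞$. By the continuation criterion it suffices to get, on any finite time interval $[0,T]$, an a priori bound on $\norm[C^{1+β,\,(1+β)/2}]{\xe}$ — concretely, uniform bounds on $\xe$, on $\xe_u$, and a positive lower bound on $\xe_u^2+ε$ (the latter is free since $ε>0$, which is precisely the point of the regularization). A uniform bound on $\xe$ itself and a lower bound keeping $\xe$ away from making $\xe_u$ blow up come from the structure: I would use the comparison principle. Note that $\xe\equiv -1 + \f{2(u-a)}{b-a}$... no — better: the boundary data are fixed and one can compare with suitable sub/supersolutions; more robustly, differentiate the equation in $u$ to get a (linear, once $\xe$ is known) parabolic equation for $\xe_u$ and apply the maximum principle to bound $\xe_u$ from above and below, or equivalently bound $\xe_t$ via comparison as the paper announces it will do in Lemma~\ref{lem:bound:xet}. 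Since that lemma is stated later, for the purposes of \emph{this} lemma I would instead argue directly: the $C^0$ bound on $\xe$ follows from the maximum principle applied to the equation written as $\xe_t(\xe_u^2+ε)=\xe_{uu}-f(u)$ with fixed Dirichlet data, and then a bound on $\xe_u$ at fixed finite time follows from $L^\infty$ bounds on $\xe_t$ (hence on $\xe_{uu}-f(u)$ in terms of $\xe_u^2+ε$) combined with the boundary conditions, closing the estimate via a Bernstein-type / integral argument.

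The main obstacle, and the step I would spend the most care on, is obtaining the a priori gradient bound for $\xe$ that is \emph{uniform on finite time intervals} (for fixed $ε$ — uniformity in $ε$ is not needed here, that is deferred to Lemmata~\ref{lem:boundsxe:h1andli}, \ref{lem:boundsxe:h2andcalpha}). The cleanest route is a Bernstein estimate: set $w=\xe_u^2$, compute its parabolic equation, and show that the nonlinear terms have a favourable sign or can be absorbed, so that $\max w$ cannot exceed a constant depending on $\norm[C^1]{x_0}$, $\norm[C^2]{f}$, $a$, $b$, $ε$ and $T$; one must also control $\xe_u$ on the lateral boundary, which for a 1D problem follows from the $C^0$ bound on $\xe$ together with the equation (the boundary point constraints pin down $\xe_{uu}$ there). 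Once $\xe$, $\xe_u$, and $\xe_u^2+ε\ge ε$ are under control, uniform parabolicity holds on $[0,T]$, Schauder theory gives a uniform $C^{2+β,1+β/2}$ bound, and the standard continuation argument forces $T_{\max}=∞$; uniqueness follows from the quasilinear comparison/energy argument (two solutions, subtract, test, Grönwall) using the Lipschitz dependence of the right-hand side on $(\xe,\xe_u,\xe_{uu})$ on the region where $\xe_u$ is bounded.
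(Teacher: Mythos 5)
Your plan is correct in substance, but it reconstructs by hand what the paper obtains by a two-line citation: the actual proof invokes \cite[Thm.\ VI.4.1]{LSU}, the global existence and uniqueness theorem for quasilinear parabolic problems in \emph{one} space variable, whose structural hypotheses on $a(u,p)=\f{1}{p^2+ε}$ and $b(u,p)=\f{-f(u)}{p^2+ε}$ (note $|b|\le\norm[L^\infty]{f}\,a$, so the lower-order term is harmless) together with the compatibility of the data deliver a unique solution in $C^{2+α,1+α/2}([a,b]\times[0,∞))$ directly; the $C^{4,2}$ regularity for $t>0$ then comes from successive applications of the linear Schauder theory \cite[Thm.\ IV.5.2]{LSU}, exactly as you bootstrap. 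So the two routes differ only in that you propose to re-derive the a priori estimates underlying the LSU theorem, while the paper outsources them. Your observation about the corner compatibility condition is accurate and worth recording: what is needed is ${x_0}_{uu}(a)=f(a)$, ${x_0}_{uu}(b)=f(b)$, which is what \eqref{cond:x0} says in the model case $f(u)=u$ the authors have in mind.

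The one step of your sketch that needs firming up is the gradient bound, which you leave as ``Bernstein-type / integral argument''. Be careful: the inequality $|\xe_{uu}|\le \norm[L^\infty]{f}+c_0(\xe_u^2+ε)$ coming from the $\xe_t$-bound has exactly quadratic growth in $\xe_u$, the borderline case in which the naive ``integrate $\arctan(\xe_u)$'' argument does not close without a smallness condition. Two clean repairs exist. (i) For fixed $ε$, differentiate the equation: $w=\xe_u$ satisfies $w_t=\f{w_{uu}-f'(u)}{w^2+ε}-\f{2ww_u(w_u-f)}{(w^2+ε)^2}$, whose first-order term vanishes at critical points of $w$ and whose zeroth-order forcing is bounded by $\norm[L^\infty]{f'}/ε$; the maximum principle then bounds $w$ in the interior in terms of its parabolic boundary values, and the lateral boundary gradient estimate follows from standard barriers of the form $\mp1\pm K(u-a)\mp\f{\norm[L^\infty]{f}}{2}(u-a)^2$. (ii) Alternatively, and more in the spirit of the rest of the paper, the energy identity of Lemma~\ref{lem:energy} gives $\norm[\Lab2]{\xe_u(\cdot,t)}\le C$, whence $\norm[\Lab1]{\xe_{uu}(\cdot,t)}\le C$ by the quadratic bound, and $W^{1,1}((a,b))\hookrightarrow L^{∞}((a,b))$ bounds $\xe_u$ uniformly (the chain of Lemmata~\ref{lem:boundsxe:h1andli}--\ref{lem:boundsxe:h2andcalpha}). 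With either repair your continuation and uniqueness arguments go through.
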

\begin{proof}
The classical parabolic theory in the form of \cite[Thm. VI.4.1]{LSU} provides $α>0$ and a unique solution $\xe\in C^{2+α,1+\f{α}2}([a,b]\times[0,∞))$ and successive applications of \cite[Thm. IV.5.2]{LSU} additionally ensure the higher regularity claimed in \eqref{regularity:xe}. 
\end{proof}

The following first boundedness information goes back to an observation already made in \cite{AF}:
\begin{lemma}\label{lem:bound:xet}
 Let $a,b\in ℝ$ with $a<b$, $f\in C^2(ℝ)$ and let $x_0$ be as in \eqref{cond:x0} and $ε>0$. Then, with $\xe$ from Lemma~\ref{lem:exeps},  
it holds that 
\[
  [0,∞)\ni t \mapsto \norm[\Labi]{\xet(\cdot,t)} 
 \]
 is nonincreasing. In particular, for every $t>0$ we obtain
 \begin{equation}\label{bound:xet}
  \norm[\Labi]{\f{\xe_{uu}(\cdot,t)-f(u)}{(\xe_u(\cdot,t))^2+ε}}=\norm[\Labi]{\xe_t(\cdot,t)}\le \norm[\Labi]{\f{{x_0}_{uu}-f(u)}{{x_0}_u^2}}.
 \end{equation}
\end{lemma}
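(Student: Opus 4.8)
The plan is to differentiate the equation in \eqref{xepseq} with respect to $t$ and to observe that the resulting equation for $\ve:=\xet$ is a linear, uniformly parabolic equation \emph{without zeroth-order term}, to which a maximum-principle comparison with the (vanishing) boundary data applies. Concretely, abbreviate $P:=\xe_u^2+ε$, so that \eqref{xepseq} reads $P\xet=\xe_{uu}-f(u)$. Since $\xe\in C^{4,2}([a,b]\times(0,∞))$ by Lemma~\ref{lem:exeps}, this identity may be differentiated in $t$; using $P_t=2\xe_u\ve_u$ one obtains, for $(u,t)\in(a,b)\times(0,∞)$,
\begin{equation*}
 \ve_t=\f{1}{P}\ve_{uu}-\f{2\xet\,\xe_u}{P}\ve_u.
\end{equation*}
The coefficients $\f1P$ and $\f{2\xet\xe_u}{P}$ are continuous on $[a,b]\times(0,∞)$, and $P\ge ε>0$ makes this equation uniformly parabolic on every strip $[a,b]\times[s,t]$ with $0<s\le t$. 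Moreover, differentiating the boundary conditions $\xe(a,\cdot)\equiv-1$, $\xe(b,\cdot)\equiv1$ in $t$ gives $\ve(a,\cdot)=\ve(b,\cdot)=0$.

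Now I would apply the weak maximum principle for linear parabolic operators without zeroth-order term to $\ve$ on $[a,b]\times[s,t]$: the extrema of $\ve$ over this rectangle are attained on its parabolic boundary $(\set{a,b}\times[s,t])\cup([a,b]\times\set{s})$, on whose lateral part $\ve$ vanishes. Hence $\norm[\Labi]{\ve(\cdot,t)}\le\norm[\Labi]{\ve(\cdot,s)}$ for all $0<s\le t$, that is, $(0,∞)\ni t\mapsto\norm[\Labi]{\xet(\cdot,t)}$ is nonincreasing. Since $\xe\in C^{2,1}([a,b]\times[0,∞))$, the function $\ve$ is continuous up to $t=0$, so letting $s\searrow0$ extends the monotonicity to $[0,∞)$.

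For \eqref{bound:xet} the first equality is just \eqref{xepseq}, and the asserted bound follows by evaluating the monotonicity at $s=0$: there $\ve(\cdot,0)=\f{{x_0}_{uu}-f(u)}{{x_0}_u^2+ε}$, and ${x_0}_u^2+ε\ge{x_0}_u^2>0$ (using ${x_0}_u>0$ from \eqref{cond:x0}) yields $\norm[\Labi]{\ve(\cdot,0)}\le\norm[\Labi]{\f{{x_0}_{uu}-f}{{x_0}_u^2}}$.

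\textbf{Main obstacle.} There is no deep difficulty here; the only points requiring care are (i) ensuring enough regularity to differentiate the PDE in $t$ in the classical sense, which is exactly what the $C^{4,2}$-part of Lemma~\ref{lem:exeps} provides for $t>0$, while the $C^{2,1}$-regularity together with the limit $s\searrow0$ handles the endpoint $t=0$; and (ii) noting that it is precisely the absence of a zeroth-order term in the equation for $\ve$ that permits the comparison with the zero boundary values without any sign restriction on the coefficients.
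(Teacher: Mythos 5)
Your proposal is correct and follows essentially the same route as the paper: differentiate the equation in $t$ to obtain a linear parabolic equation for $\xe_t$ with no zeroth-order term and vanishing lateral boundary values, then apply the maximum principle (the paper phrases this as comparison with the constant super-/subsolutions $\pm\norm[\Labi]{\xe_t(\cdot,τ)}$), and finally drop $ε$ from the denominator of the initial datum using ${x_0}_u>0$. Your extra care in working on strips $[s,t]$ with $s>0$ and letting $s\searrow 0$ is a reasonable way to reconcile the $C^{4,2}$-regularity (available only for $t>0$) with the claim on $[0,∞)$.
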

\begin{proof}
 According to \eqref{regularity:xe} and \eqref{xepseq}, for every $τ\ge 0$ the function $\ye:=\xe_t$ satisfies 
 \begin{align*}
  &\yet=\f1{\xe_u^2+ε} \ye_{uu} -\f{2\xe_u(\xe_{uu}-f(u))}{(\xe_u^2+ε)^2} \ye_u \qquad &&\text{in }(a,b)\times (τ,\infty),\\
  &\ye(a,t)=0=\ye(b,t)\qquad &&\text{for all } t\in(τ,∞),\\
  &\ye(\cdot,τ)=\xe_t(\cdot,τ)=\f{\xe_{uu}(\cdot,τ)-f(u)}{\xe_u^2(\cdot,τ)+ε}.
 \end{align*}
 Comparison with the super- and subsolutions $\pm \norm[\Labi]{\ye(\cdot,τ)}$
%, $-\norm[\Labi]{\ye(\cdot,τ)}$
 proves the assertion; the last part of the statement relies on the choice $τ=0$, \eqref{regularity:xe} and the obvious estimate 
 \[
  \norm[\Labi]{\f{{x_0}_{uu}-f(u)}{{x_0}_u^2+ε}}\le\norm[\Labi]{\f{{x_0}_{uu}-f(u)}{{x_0}_u^2}}.\qedhere
 \]
\end{proof}

\begin{lemma}\label{lem:energy}
 Let $a,b\in ℝ$ with $a<b$, $f\in C^2(ℝ)$ and let $x_0$ be as in \eqref{cond:x0} and $ε>0$. Then $\xe$ from Lemma~\ref{lem:exeps} satisfies 
 \[
  \ddt \kkl{\iab \f{\xe_u^2(u,t)}2\du  + \iab f(u)\xe(u,t)\du } \le -\iab \xe_u^2\xe_t^2\qquad \text{for } t\in (0,∞). 
 \]
\end{lemma}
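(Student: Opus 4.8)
The plan is to differentiate the energy along the solution, integrate by parts, and then invoke the equation \eqref{xepseq}. By Lemma~\ref{lem:exeps} the function $\xe$ has the regularity \eqref{regularity:xe}; in particular it is smooth enough on $[a,b]\times(0,∞)$ to differentiate under the integral sign, so that for $t\in(0,∞)$
\[
 \ddt\kkl{\iab \f{\xe_u^2(u,t)}2\du + \iab f(u)\xe(u,t)\du} = \iab \xe_u(u,t)\,\xe_{ut}(u,t)\du + \iab f(u)\,\xe_t(u,t)\du.
\]

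Next I would integrate by parts in the first integral. Differentiating the time-independent boundary conditions $\xe(a,\cdot)\equiv -1$ and $\xe(b,\cdot)\equiv 1$ from \eqref{xepseq} (which is legitimate since $\xe\in C^{2,1}([a,b]\times[0,∞))$) shows that $\xe_t(a,t)=\xe_t(b,t)=0$, so the boundary terms vanish and $\iab \xe_u\xe_{ut}\du = -\iab \xe_{uu}\xe_t\du$. Hence the right-hand side above equals $-\iab(\xe_{uu}(u,t)-f(u))\,\xe_t(u,t)\du$.

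Finally, rewriting the PDE in \eqref{xepseq} as $\xe_{uu}-f(u)=(\xe_u^2+ε)\,\xe_t$ turns this into
\[
 -\iab(\xe_u^2+ε)\,\xe_t^2\du = -\iab \xe_u^2\xe_t^2\du - ε\iab \xe_t^2\du \le -\iab \xe_u^2\xe_t^2\du,
\]
which is precisely the claimed inequality. I do not expect any genuine obstacle: each step is a routine manipulation, and the only point that needs a word of justification is that the regularity \eqref{regularity:xe} legitimizes both the differentiation under the integral sign and the integration by parts for $t>0$, which it does comfortably.
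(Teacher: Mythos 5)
Your proposal is correct and follows exactly the same route as the paper: differentiate the functional, integrate by parts using $\xe_t(a,t)=0=\xe_t(b,t)$ from the time-independent Dirichlet data, substitute $\xe_{uu}-f(u)=(\xe_u^2+ε)\xe_t$, and discard the nonnegative term $ε\iab\xe_t^2$. Nothing to add.
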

\begin{proof}
 The boundary conditions in \eqref{xepseq} ensure that $\xe_t(a,t)=0=\xe_t(b,t)$ for all $t>0$, and hence 
a straightforward computation reveals 
 \begin{align*}
   \ddt &\kkl{\iab \f{\xe_u^2(u,t)}2\du  + \iab f(u)\xe(u,t)\du } = \iab \xe_u\xe_{ut} + \iab f(u)\xe_t \\
   &= -\iab \xe_{uu}\xe_t + \xe_u(b)\xe_t(b) - \xe_u(a)\xe_t(a) + \iab f(u)\xe_t\\
   &= -\iab (\xe_{uu}-f(u))\xe_t
    -\iab (\xe_u^2+ε)\xe_t^2
   \le -\iab \xe_u^2\xe_t^2 \quad \text{on } (0,∞). \qedhere
 \end{align*}
\end{proof}

Next we prepare a variant of Poincar\'e's inequality for elements of $X$.
\begin{lemma}\label{lem:almostpoincare}
 Let $a,b\in ℝ$ with $a<b$. Then 
%for every $ξ\in X$  
  \[
  \betrag{\xi(u)} \le \sqrt{b-a} \kl{\iab(\xi_u(u))^2\du}^{\f12}\qquad \text{for } ξ\in X. 
 \]
\end{lemma}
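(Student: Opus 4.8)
The plan is to exploit the sign change encoded in the boundary conditions defining $X$. First I would record that in one space dimension $W^{1,2}((a,b))$ embeds continuously into $C([a,b])$, so every $ξ\in X$ possesses a continuous representative for which the prescribed boundary values $ξ(a)=-1$ and $ξ(b)=1$ are attained pointwise. Since then $ξ(a)<0<ξ(b)$, the intermediate value theorem furnishes a point $u^*\in(a,b)$ with $ξ(u^*)=0$.

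The decisive step is to integrate the derivative starting from this interior zero rather than from an endpoint. For an arbitrary $u\in[a,b]$ one writes $ξ(u)=ξ(u)-ξ(u^*)=\int_{u^*}^u ξ_u(s)\ds$, and a single application of the Cauchy--Schwarz inequality gives $\betrag{ξ(u)}\le \sqrt{\betrag{u-u^*}}\,\kl{\int_{u^*}^u (ξ_u(s))^2\ds}^{\f12}$. Estimating $\betrag{u-u^*}\le b-a$ and enlarging the domain of integration back to $(a,b)$ then yields $\betrag{ξ(u)}\le \sqrt{b-a}\,\kl{\iab(ξ_u(s))^2\ds}^{\f12}$, which is precisely the assertion.

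The point worth emphasizing -- and the only place where the structure of $X$ really enters -- is that one must integrate from the interior zero $u^*$: integrating instead from $a$ or $b$ would leave an additive constant $\betrag{ξ(a)}=\betrag{ξ(b)}=1$, which the claimed inequality does not tolerate. The existence of such a zero is exactly what the opposite signs of the two boundary values guarantee, so this is the heart of the matter. Beyond this observation there is no genuine obstacle; the argument rests only on the one-dimensional Sobolev embedding (to make the pointwise boundary values and the intermediate value theorem legitimate) and on one invocation of Cauchy--Schwarz.
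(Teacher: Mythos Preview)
Your proof is correct. The route differs slightly from the paper's, and the difference is worth a sentence. The paper does \emph{not} look for an interior zero; instead it integrates once from $a$ and once from $b$: from $\xi(u)=\xi(a)+\int_a^u \xi_u$ with $\xi(a)=-1<0$ one obtains the upper bound $\xi(u)\le\sqrt{b-a}\,\norm[\Lab2]{\xi_u}$, and from $\xi(u)=\xi(b)-\int_u^b \xi_u$ with $\xi(b)=1>0$ one obtains the matching lower bound. So your remark that integrating from an endpoint ``would leave an additive constant \ldots\ which the claimed inequality does not tolerate'' is not quite accurate: the specific signs of the boundary values make those constants work in one's favour, and the paper exploits exactly this. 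Your approach trades these two one-sided estimates for a single estimate anchored at the interior zero $u^*$; this handles both signs at once but requires invoking the Sobolev embedding and the intermediate value theorem. Both arguments rest on the same ingredients (fundamental theorem of calculus plus Cauchy--Schwarz) and are essentially equivalent in difficulty.
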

\begin{proof}
 In light of the left boundary condition in \eqref{def:X}, the upper bound is an immediate consequence of the fundamental theorem of calculus and Hölder's inequality: 
 \begin{align*}
  \xi(u)&=\xi(a)+\int_a^u \xi(v)\dv \\
  &\le -1 +\kl{\int_a^u \xi_u^2(v) \dv}^{\f12} \kl{\int_a^u 1}^{\f12} \le \sqrt{b-a} \kl{\iab \xi_u^2(v)\dv}^{\f12}.
 \end{align*}
 In the same way, a corresponding lower bound results from 
 \begin{align*}
  \xi(u)&=\xi(b)-\int_u^b \xi(v)\dv \ge - \sqrt{b-a} \kl{\iab \xi_u^2(v)\dv}^{\f12}.\qedhere
 \end{align*}
\end{proof}

This inequality ensures that for $\calF$ from \eqref{def:F}, bounds on $\calF(ξ)$ already entail certain bounds for $ξ\in X$. 

\begin{lemma}\label{lem:Fcoercive}
 Let $a,b\in ℝ$ with $a<b$ and $f\in C^2(ℝ)$. For every $M>0$ there is $C>0$ such that the following implication holds: 
 Whenever $ξ\in X$ satisfies $\calF(ξ)\le M$, then 
 \begin{align}
  \norm[\Lab2]{ξ_u}&\le C \label{bound:xeuL2-prelim}\\
  \quad \text{and}\quad \norm[\Labi]{ξ}&\le C. \label{bound:xeLi-prelim}
 \end{align}
\end{lemma}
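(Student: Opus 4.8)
The strategy is to exploit that $\calF$ is the sum of a quadratic term $\f12\iab ξ_u^2$ and a linear one $\iab f(u)ξ(u)\du$, and that on the bounded interval $[a,b]$ the latter can be estimated against the former; a bound on $\calF(ξ)$ will then coerce $\norm[\Lab2]{ξ_u}$, after which the desired bound on $\norm[\Labi]{ξ}$ follows from the Poincar\'e-type inequality of Lemma~\ref{lem:almostpoincare}.

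Concretely, since $f\in C^2(ℝ)$ is in particular continuous, $F:=\norm[\Labi]{f}<∞$. For $ξ\in X$ I would combine this with Lemma~\ref{lem:almostpoincare} to obtain
\[
 \betrag{\iab f(u)ξ(u)\du}\le F\iab\betrag{ξ(u)}\du\le F(b-a)\sup_{u\in[a,b]}\betrag{ξ(u)}\le F(b-a)^{\f32}\norm[\Lab2]{ξ_u}.
\]
Abbreviating $β:=F(b-a)^{\f32}$ and $y:=\norm[\Lab2]{ξ_u}$, the hypothesis $\calF(ξ)\le M$ then gives $\f12 y^2-β y\le M$, that is $(y-β)^2\le β^2+2M$, and hence $y\le β+\sqrt{β^2+2M}$. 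Choosing $C:=β+\sqrt{β^2+2M}$ — which depends only on $a$, $b$, $f$ and $M$ — yields \eqref{bound:xeuL2-prelim}, and a second application of Lemma~\ref{lem:almostpoincare} gives $\norm[\Labi]{ξ}\le\sqrt{b-a}\,\norm[\Lab2]{ξ_u}\le\sqrt{b-a}\,C$, so that \eqref{bound:xeLi-prelim} holds as well after enlarging $C$ to $\max\{C,\sqrt{b-a}\,C\}$.

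I do not expect a genuine obstacle here; the only point requiring (mild) care is that the linear term is absorbed by the quadratic one, which is automatic because $y^2$ eventually dominates any fixed multiple of $y$, and this in turn relies precisely on $f$ being bounded on the bounded interval $[a,b]$ so that the coefficient $β$ is finite. (If $f$ were only assumed in $L^2((a,b))$, the same argument would go through with $F$ replaced by $\norm[\Lab2]{f}$ and a Cauchy--Schwarz estimate in place of the crude $L^∞$ bound.)
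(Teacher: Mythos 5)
Your proof is correct and follows essentially the same route as the paper: both use Lemma~\ref{lem:almostpoincare} to bound the linear term $\iab f(u)ξ(u)\du$ by a constant multiple of $\norm[\Lab2]{ξ_u}$, solve the resulting quadratic inequality for $\norm[\Lab2]{ξ_u}$, and then apply Lemma~\ref{lem:almostpoincare} once more for the $L^∞$ bound. The only (immaterial) difference is the constant in front of the linear term: you use $\normm{L^∞}{f}(b-a)^{3/2}$ where the paper uses $\sqrt{b-a}\iab|f(u)|\du$.
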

\begin{proof}
 If we abbreviate $I:=\iab ξ_u^2(u)\du$, then Lemma~\ref{lem:almostpoincare} implies 
 \[
  M\ge \f12 I - c_1 I^{\f12}, 
 \]
 where $c_1:=\sqrt{b-a}\iab |f(u)|\du$. Accordingly, $I\le 2c_1^2+2M+2c_1\sqrt{c_1^2+2M}$ and \eqref{bound:xeuL2-prelim} follows, so that \eqref{bound:xeLi-prelim} is a consequence of Lemma~\ref{lem:almostpoincare}. 
\end{proof}

An immediate consequence are the following bounds for solutions to \eqref{xepseq}.
\begin{lemma}\label{lem:boundsxe:h1andli}
 Let $a,b\in ℝ$ with $a<b$, $f\in C^2(ℝ)$ and let $x_0$ be as in \eqref{cond:x0}.
 There is $C>0$ such that for every $ε>0$ and $t>0$ the functions $\xe$ from Lemma~\ref{lem:exeps} fulfil 
 \begin{align}
  \norm[\Lab2]{\xe_u(\cdot,t)}&\le C \label{bound:xeuL2},\\
 % \text{and } \quad 
\norm[\Labi]{\xe(\cdot,t)}&\le C. \label{bound:xeLi}
 \end{align}
\end{lemma}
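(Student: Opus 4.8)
The plan is to deduce Lemma~\ref{lem:boundsxe:h1andli} from Lemma~\ref{lem:Fcoercive} by showing that the energy $\calF(\xe(\cdot,t))$ stays bounded, uniformly in $ε>0$ and $t>0$, by a quantity depending only on $x_0$, $f$, $a$, $b$. Once such a bound $\calF(\xe(\cdot,t))\le M$ is in hand, note that $\xe(\cdot,t)\in X$ for every $t\ge 0$ (it has the correct boundary values by \eqref{xepseq} and lies in $W^{1,2}((a,b))$ by the regularity \eqref{regularity:xe}), so Lemma~\ref{lem:Fcoercive} immediately yields the constant $C$ and the two bounds \eqref{bound:xeuL2} and \eqref{bound:xeLi}.

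First I would observe that Lemma~\ref{lem:energy} says precisely that $t\mapsto \calF(\xe(\cdot,t))$ is nonincreasing (the right-hand side $-\iab \xe_u^2\xe_t^2$ is $\le 0$), so that for every $t>0$ and every $ε>0$ we have
\begin{equation*}
 \calF(\xe(\cdot,t))\le \calF(\xe(\cdot,0))=\calF(x_0).
\end{equation*}
The number $\calF(x_0)=\f12\iab {x_0}_u^2\du + \iab f(u)x_0(u)\du$ is finite and independent of $ε$ and $t$; set $M:=\calF(x_0)$. This is the only genuine input, and it is a one-line consequence of Lemma~\ref{lem:energy} together with monotonicity of the integral.

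Then I would apply Lemma~\ref{lem:Fcoercive} with this $M$: it furnishes a constant $C=C(M,f,a,b)$ — hence ultimately $C=C(x_0,f,a,b)$, independent of $ε$ and $t$ — such that $\calF(\xe(\cdot,t))\le M$ forces $\norm[\Lab2]{\xe_u(\cdot,t)}\le C$ and $\norm[\Labi]{\xe(\cdot,t)}\le C$, which are exactly \eqref{bound:xeuL2} and \eqref{bound:xeLi}. I do not anticipate a real obstacle here: the content is entirely the energy-dissipation identity of Lemma~\ref{lem:energy} and the coercivity of $\calF$ established in Lemma~\ref{lem:Fcoercive}; the only point requiring a word of care is to record that $\xe(\cdot,t)$ indeed belongs to $X$ for all $t\ge0$ so that those two lemmata are applicable, and that all constants produced along the way depend on nothing but the fixed data.
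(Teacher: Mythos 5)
Your proposal is correct and is essentially the paper's own argument: the paper likewise notes that $\xe(\cdot,t)\in X$, invokes Lemma~\ref{lem:energy} to get $\calF(\xe(\cdot,t))\le \calF(x_0)=:M$, and then applies Lemma~\ref{lem:Fcoercive} with this $M$. Nothing further is needed.
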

\begin{proof}
For every $ε>0$ and $t>0$, $\xe(\cdot,t)\in X$. Due to Lemma~\ref{lem:energy}, Lemma~\ref{lem:Fcoercive} hence becomes applicable with $M:=\iab\f{{x_0}_u^2}2 + \iab f(u)x_0$, and \eqref{bound:xeuL2} and \eqref{bound:xeLi} follow. 
\end{proof}

With the help of Lemma~\ref{lem:bound:xet}, we can turn these bounds into higher regularity information.
\begin{lemma}\label{lem:boundsxe:h2andcalpha}
 Let $a,b\in ℝ$ with $a<b$, $f\in C^2(ℝ)$ and let $x_0$ be as in \eqref{cond:x0} and $ε_0>0$. Then for every $p\in[1,∞)$ 
 there is $C>0$ such that the functions $\xe$ from Lemma~\ref{lem:exeps} satisfy 
 \begin{equation}\label{bound:xeuuLp}
  \norm[\Lab p]{\xe_{uu}(\cdot,t)} \le C \qquad \text{for all } t>0 \text{ and } ε\in(0,ε_0)
 \end{equation}
 and, with $α:=1-\f1p\in[0,1)$,   
 \begin{equation}\label{bound:xeuCalpha}
  \normm{C^{α}([a,b])}{\xe_u(\cdot,t)}\le C \qquad \text{for all } t>0 \text{ and } ε\in(0,ε_0).
 \end{equation}
\end{lemma}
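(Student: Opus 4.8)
The plan is to bootstrap from the already-established bounds in Lemmata~\ref{lem:bound:xet} and \ref{lem:boundsxe:h1andli} by simply solving the equation for $\xe_{uu}$. From the PDE in \eqref{xepseq} we read off the pointwise identity
\[
 \xe_{uu}(\cdot,t) = (\xe_u^2(\cdot,t)+ε)\,\xe_t(\cdot,t) + f(u),
\]
so the task reduces to controlling the right-hand side in $\Lab p$, uniformly in $t>0$ and $ε\in(0,ε_0)$. For the first factor, Lemma~\ref{lem:bound:xet} yields $\norm[\Labi]{\xe_t(\cdot,t)}\le \norm[\Labi]{\f{{x_0}_{uu}-f(u)}{{x_0}_u^2}}=:K$ for all $t>0$ and all $ε>0$, a bound that does not depend on $t$ or $ε$ at all. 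For the second factor, Lemma~\ref{lem:boundsxe:h1andli} gives $\norm[\Lab2]{\xe_u(\cdot,t)}\le C$, hence $\norm[\Lab1]{\xe_u^2(\cdot,t)}\le C^2$, while $ε<ε_0$ is bounded; and $f$ is continuous, so $\norm[\Lab p]{f}$ is finite for every $p$ (here one uses that $(a,b)$ is bounded).

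Putting these together, $\norm[\Lab1]{(\xe_u^2+ε)\xe_t}\le K(C^2+ε_0(b-a))$, which already establishes \eqref{bound:xeuuLp} for $p=1$. To reach general $p\in[1,∞)$ one iterates: once $\xe_{uu}(\cdot,t)$ is bounded in $\Lab{p_0}$ uniformly in $t,ε$, standard one-dimensional Sobolev embedding $W^{1,p_0}((a,b))\hookrightarrow C^{1-1/p_0}([a,b])$ (valid since we also control $\norm[\Labi]{\xe(\cdot,t)}$ and thus, via interpolation or the boundary values, $\norm[\Lab{p_0}]{\xe_u(\cdot,t)}$ together with $\norm[\Lab{p_0}]{\xe_{uu}(\cdot,t)}$) upgrades $\xe_u(\cdot,t)$ to a uniform $L^{∞}((a,b))$ bound. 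Feeding $\norm[\Labi]{\xe_u(\cdot,t)}\le C'$ back into the identity for $\xe_{uu}$ gives $\norm[\Labi]{\xe_{uu}(\cdot,t)}\le (C'^2+ε_0)K+\norm[\Labi]{f}$, hence a uniform bound in $\Lab p$ for \emph{every} $p\in[1,∞)$. In fact the single step from $p=1$ to $p=∞$ is available directly: $\norm[\Lab1]{\xe_{uu}(\cdot,t)}\le C$ plus $\norm[\Lab2]{\xe_u(\cdot,t)}\le C$ and the Dirichlet condition already force $\xe_u(\cdot,t)$ to be bounded in $W^{1,1}((a,b))\hookrightarrow L^{∞}((a,b))$, which closes the loop in one pass. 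This proves \eqref{bound:xeuuLp}.

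For \eqref{bound:xeuCalpha} with $α=1-\f1p$: having \eqref{bound:xeuuLp}, we know $\xe_u(\cdot,t)$ is bounded in $W^{1,p}((a,b))$ uniformly in $t$ and $ε$ — the bound on $\norm[\Lab p]{\xe_u(\cdot,t)}$ itself follows from \eqref{bound:xeuL2} (since $(a,b)$ has finite measure, $L^2\hookrightarrow L^p$ for $p\le2$, and for $p>2$ one uses the $L^{∞}$ bound just obtained) and the bound on $\norm[\Lab p]{\xe_{uu}(\cdot,t)}$ is \eqref{bound:xeuuLp}. The Morrey/Sobolev embedding $W^{1,p}((a,b))\hookrightarrow C^{1-1/p}([a,b])$ in one space dimension is continuous with a constant depending only on $a,b,p$, so \eqref{bound:xeuCalpha} follows with the same constant $C$ (enlarged).

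The main obstacle here is not any single estimate — each is elementary — but rather the bookkeeping of \emph{uniformity}: one must check that none of the constants $K$, $C$, $C'$ depend on $ε$ (beyond the harmless upper cutoff $ε_0$) or on $t$. This is exactly why the preceding lemmata were stated with $ε$- and $t$-independent constants, and the proof is essentially the observation that the quotient structure of the nonlinearity, once inverted, turns an $L^{∞}$ bound on $\xe_t$ plus an $L^2$ bound on $\xe_u$ into an $L^p$ bound on $\xe_{uu}$ for free. One should also be slightly careful that the Sobolev embedding is being applied to $\xe_u(\cdot,t)$ for fixed $t$, not to $\xe$ jointly in $(u,t)$; since all bounds are uniform in $t$, this causes no difficulty.
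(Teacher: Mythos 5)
Your proof is correct and follows essentially the same route as the paper: both rewrite the equation as $\xe_{uu}-f(u)=(\xe_u^2+ε)\xe_t$, combine the uniform-in-$(t,ε)$ $L^{∞}$ bound on $\xe_t$ from Lemma~\ref{lem:bound:xet} with the $L^2$ bound on $\xe_u$ from Lemma~\ref{lem:boundsxe:h1andli} to obtain \eqref{bound:xeuuLp} for $p=1$, and then bootstrap through the one-dimensional Sobolev embedding of $W^{1,1}((a,b))$ to reach general $p$ and \eqref{bound:xeuCalpha}. The only (harmless) deviation is that you push the embedding all the way to $L^{∞}((a,b))$, getting a uniform $L^{∞}$ bound on $\xe_{uu}$ in one pass, whereas the paper only uses $W^{1,1}((a,b))\hookrightarrow L^q((a,b))$ for finite $q$ — both suffice since only finite $p$ is claimed.
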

\begin{proof}
 Due to Lemma~\ref{lem:bound:xet}, we can find $c_0>0$ such that for every $ε>0$, $t\in(0,∞)$ and almost every $u\in(a,b)$ 
we have
 \begin{equation}\label{eq:c0:forboundxeuuLp}
  \betrag{\xe_{uu}(u,t)-f(u)} \le c_0 \betrag{\xe_u^2+ε}. 
 \end{equation}
 Integration over $(a,b)$ combined with \eqref{bound:xeuL2} shows \eqref{bound:xeuuLp} for $p=1$. As $W^{1,1}((a,b))$ is compactly embedded into $L^q((a,b))$ for any $q\in[1,∞)$, from \eqref{bound:xeuuLp} for $p=1$ and \eqref{bound:xeuL2} we can conclude the existence of $c_1=c_1(q)>0$ such that 
 \[
  \norm[\Lab q]{\xe_u(\cdot,t)} \le c_1(q) \qquad \text{for all } ε\in(0,ε_0) \text{ and all } t>0.
 \]
 Once more relying on \eqref{eq:c0:forboundxeuuLp}, we see that hence for any $p\in(1,∞)$ 
 \[
  \iab \betrag{\xe_{uu}(u,t)-f(u)}^p\du \le c_0 \iab \betrag{\xe_u^2(u,t) +ε}^p \du \le 2^pc_0c_1(2p) + 2^p(b-a)ε_0^p 
 \]
 holds for every $t>0$ and $ε\in(0,ε_0)$ and therefore \eqref{bound:xeuuLp} is proven for arbitrary $p$. Obtaining \eqref{bound:xeuCalpha} is again possible by a Sobolev embedding. %or by Hölder's inequality. In detail: By $\iab \xe_u^2\le c$ there is $u$ s.t. $\xe_u(u)\le \sqrt{\f{c}{b-a}}$ and $\xe_u(v)-\xe_u(w) = \int_v^w \xe_{uu} \le \kl{\iab \xe_{uu}^p}^{\f1p} |v-w|^{\f{p-1}p}$. The latter alone implies Hölder-continuity, and together these two estimates give boundedness of $\norm[\Labi]{\xe_u}$.
\end{proof}

%It is time for 
Now we are ready to use compactness arguments to supply a solution to \eqref{xeq}.

\begin{lemma}\label{lem:conv}
 Let $a,b\in ℝ$ with $a<b$, $f\in C^2(ℝ)$ and let $x_0$ be as in \eqref{cond:x0}, $p\in(1,∞)$ and $α\in(0,1)$. 
Then there are a sequence $(ε_k)_{k\in ℕ}\searrow 0$ and a function 
% \begin{align*}
\[
  x\in C^0([0,∞);C^{1+α}([a,b]))
\]
with
\[
  x_{uu}\in L^p_{loc}([a,b]\times[0,∞)),\qquad
  x_t\in L^{∞}((a,b)\times(0,∞)),
\]
 %\end{align*}
 such that 
 \begin{align}
  x_{ε_k}&\to x && \text{in } C^0_{loc}([0,∞);C^{1+α}([a,b])),\label{conv:hoelder}\\
  {x_{ε_k}}_{uu}&\weakto x_{uu} && \text{in } L^p_{loc}([a,b]\times[0,∞)),\label{conv:xuulp}\\
  {x_{ε_k}}_t&\weakstarto x_t && \text{in } L^{∞}((a,b)\times(0,∞))\label{conv:xtli}, 
 \end{align}
 where $\xe$ is as given by Lemma~\ref{lem:exeps} for any $ε=ε_k$.
% and which are such that 
Moreover,
% for every $φ\in C_c^\infty((a,b)\times(0,∞))$ 
it holds that
 \begin{equation}\label{equation:weakly}
  \int_0^{∞}\iab x_u^2x_tφ = -\int_0^{∞}\iab x_uφ_u - \int_0^{∞}\iab f(u)φ
\qquad\text{for } φ\in C_c^\infty((a,b)\times(0,∞)),
 \end{equation}
 and $x(a,t)=-1$, $x(b,t)=1$ for every $t>0$, as well as $x(\cdot,0)=x_0$ in $(a,b)$. 
\end{lemma}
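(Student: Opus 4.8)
The plan is to extract convergent subsequences from the family $(\xe)_{ε>0}$ using the uniform bounds established in Lemmata~\ref{lem:bound:xet}, \ref{lem:boundsxe:h1andli} and \ref{lem:boundsxe:h2andcalpha}, and then to pass to the limit in a weak formulation of \eqref{xepseq}. First I would record the weak formulation satisfied by each $\xe$: multiplying the PDE in \eqref{xepseq} by $(\xe_u^2+ε)φ$ for $φ\in C_c^\infty((a,b)\times(0,∞))$ and integrating by parts in $u$ gives
\[
 \int_0^{∞}\iab (\xe_u^2+ε)\xe_tφ = -\int_0^{∞}\iab \xe_uφ_u - \int_0^{∞}\iab f(u)φ.
\]
This identity is what I ultimately want to take the limit of.

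The compactness step comes next. From \eqref{bound:xeuuLp} and \eqref{bound:xeuL2} the functions $\xe(\cdot,t)$ are bounded in $W^{2,p}((a,b))$ uniformly in $t$ and $ε\in(0,ε_0)$; from \eqref{bound:xet} the time derivatives $\xe_t$ are bounded in $L^∞((a,b)\times(0,∞))$, and in particular $\xe_t(\cdot,t)$ is bounded in $L^p((a,b))$ uniformly in $t$. Since $W^{2,p}((a,b))\hookrightarrow\hookrightarrow C^{1+α}([a,b])\hookrightarrow L^p((a,b))$ for $α<1-\f1p$, an Aubin--Lions type argument (or, on each bounded time interval, the Arzelà--Ascoli theorem applied to the equicontinuous-in-time family $t\mapsto\xe(\cdot,t)\in C^{1+α}([a,b])$, using the uniform $L^∞$-in-time bound on $\xe_t$ to get equicontinuity in $t$ and \eqref{bound:xeuCalpha} for a slightly better Hölder exponent to get precompactness of the time slices) yields a sequence $ε_k\searrow 0$ and a limit function $x\in C^0_{loc}([0,∞);C^{1+α}([a,b]))$ with \eqref{conv:hoelder}. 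A diagonal argument over an exhaustion of $[a,b]\times[0,∞)$ together with the reflexivity of $L^p_{loc}$ and the sequential Banach--Alaoglu theorem in $L^∞$ then produces, after passing to a further subsequence, the weak and weak-$*$ limits in \eqref{conv:xuulp} and \eqref{conv:xtli}; the limits are identified as $x_{uu}$ and $x_t$ because differentiation and the operators $∂_t$, $∂_u^2$ are closed with respect to distributional convergence, and the bounds \eqref{bound:xeuuLp}, \eqref{bound:xet} are inherited by the limit.

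Passing to the limit in the weak formulation is where I expect the only genuine subtlety, namely the nonlinear term $\int_0^{∞}\iab \xe_u^2\xe_tφ$. Here the strong convergence $\xe_u\to x_u$ in $C^0_{loc}([0,∞)\times[a,b])$ from \eqref{conv:hoelder} is decisive: $\xe_u^2φ\to x_u^2φ$ strongly in $L^1$ (indeed uniformly, on the compact support of $φ$), while $\xe_t\weakstarto x_t$ in $L^∞$, and the product of a strongly-$L^1$-convergent sequence with a weak-$*$-$L^∞$-convergent sequence converges to the product of the limits. The term $ε\int_0^{∞}\iab\xe_tφ$ vanishes as $ε_k\to0$ because $\xe_t$ is bounded in $L^∞$ on the support of $φ$. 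The linear terms $\int\xe_uφ_u$ and $\int f(u)φ$ converge trivially by \eqref{conv:hoelder}. This yields \eqref{equation:weakly}. Finally, the boundary conditions $x(a,t)=-1$, $x(b,t)=1$ pass to the limit because convergence in $C^{1+α}([a,b])$ is in particular pointwise, and the initial condition $x(\cdot,0)=x_0$ follows from \eqref{conv:hoelder} (which includes convergence at $t=0$, where every $\xe(\cdot,0)=x_0$) together with continuity of $x$ in time up to $t=0$. The main obstacle, then, is purely the handling of the quadratic nonlinearity under weak convergence, and it is resolved precisely by the compactness upgrade to uniform convergence of the gradients.
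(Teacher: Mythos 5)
Your proposal is correct and follows essentially the same route as the paper: the same uniform bounds feed an Aubin--Lions compactness argument yielding the strong convergence \eqref{conv:hoelder}, weak/weak-$*$ extraction gives \eqref{conv:xuulp}--\eqref{conv:xtli}, and the quadratic term is handled exactly as in the paper by pairing the uniform convergence of $\xe_u$ with the weak-$*$ convergence of $\xe_t$. No gaps.
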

\begin{proof}
 Without loss of generality, we may assume $p$ to be large. Then $W^{2,p}((a,b))\hookrightarrow C^{1+α}([a,b])\hookrightarrow L^{p}((a,b))$, where the first embedding is compact. 
 As $\set{\xe: ε\in(0,1)}$ is bounded in $L^{∞}((0,∞);W^{2,p}((a,b)))$ according to \eqref{bound:xeuuLp} and \eqref{bound:xeLi}, and $\set{\xet: ε\in(0,1)}$ is bounded in $L^p((0,∞);L^p((a,b)))$ due to \eqref{bound:xet}, we can apply the Aubin--Lions lemma in the form of \cite[Cor. 8.4]{simon} to conclude that the set $\set{\xe: ε\in(0,1)}$ is compact in $C^0_{loc}([0,∞);C^{1+α}([a,b]))$ and extract a sequence such that \eqref{conv:hoelder} holds. Subsequently, \eqref{bound:xeuuLp} and \eqref{bound:xet} enable us to successively find subsequences along which \eqref{conv:xuulp} and \eqref{conv:xtli} are satisfied. %We let $(ε_k)_{k\inℕ}$ be the final of these subsequences. 
 
 Because for every $ε>0$ we have 
 \[
    ε\int_0^{∞}\iab \xe_u^2\xe_tφ+ \int_0^{∞}\iab \xe_u^2\xe_tφ = -\int_0^{∞}\iab \xe_u φ_u - \int_0^{∞}\iab f(u)φ  
 \]
 for every $φ\in C_c^{∞}((a,b)\times(0,∞))$, as well as $\xe(a,t)=-1$, $\xe(b,t)=1$ for all $t>0$ and $\xe(u,0)=x_0(u)$ for all $u\in(a,b)$, \eqref{conv:hoelder} and \eqref{conv:xtli} also imply the remaining part of the statement upon a passage to the limit $ε=ε_k\searrow 0$.  
\end{proof}

\begin{proof}[Proof of Theorem~\ref{thm:existence}]
 The theorem is part of Lemma~\ref{lem:conv} and hence already proven. 
\end{proof}

\section{Long-time behaviour. Proof of Theorem~\ref{thm:longterm}} \label{sec:longterm}

In a first step we carry over the content of Lemma~\ref{lem:energy} from $\xe$ to $x$. 
\begin{lemma}\label{lem:spatiotemporalintegralfinite}
Let $a,b\in ℝ$ with $a<b$, $f\in C^2(ℝ)$ and let $x_0$ be as in \eqref{cond:x0}, and $t_1,t_2\in[0,∞)$, $t_1<t_2$. Then the function $x$ from Lemma~\ref{lem:conv} satisfies 
\[
 \f12\iab x_u^2(\cdot,t_1) + \iab f(u)x(u,t_1)\du + \int_{t_1}^{t_2}\iab x_u^2x_t^2 \le \f12 \iab x_u^2(\cdot,t_2) + \iab f(u)x(u,t_2)\du.
\]
\end{lemma}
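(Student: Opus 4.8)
The inequality to be shown is the integrated, $ε\searrow0$ version of Lemma~\ref{lem:energy}. The plan is to start from the integrated energy inequality for $\xe$ and pass to the limit $ε=ε_k\searrow0$ using the convergences from Lemma~\ref{lem:conv}, being careful that all terms converge in the right direction so that no information is lost.

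First I would fix $t_1<t_2$ in $[0,∞)$ and integrate the differential inequality from Lemma~\ref{lem:energy} over $(t_1,t_2)$, which (recalling that the left-hand side there is $\ddt\calF(\xe(\cdot,t))$ along the classical solution) yields
\[
 \f12\iab \xe_u^2(\cdot,t_1) + \iab f(u)\xe(u,t_1)\du + \int_{t_1}^{t_2}\iab \xe_u^2\xe_t^2 \le \f12 \iab \xe_u^2(\cdot,t_2) + \iab f(u)\xe(u,t_2)\du
\]
for every $ε>0$; here the integrand on the left is even bounded below by $\int_{t_1}^{t_2}\iab \xe_u^2\xe_t^2$ (one may keep the $ε$-term or drop it, it is nonnegative). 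Now I pass to the limit along the sequence $(ε_k)$ from Lemma~\ref{lem:conv}. By \eqref{conv:hoelder}, for each fixed time $\xe_u(\cdot,t)\to x_u(\cdot,t)$ and $\xe(\cdot,t)\to x(\cdot,t)$ uniformly on $[a,b]$, so the four boundary-in-time terms (the ones at $t_1$ and $t_2$) converge to their counterparts with $x$ — this handles both sides except the spatio-temporal integral.

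The remaining point is the term $\int_{t_1}^{t_2}\iab \xe_u^2\xe_t^2$, and here I only need a lower semicontinuity statement, namely
\[
 \int_{t_1}^{t_2}\iab x_u^2x_t^2 \le \liminf_{k\to∞}\int_{t_1}^{t_2}\iab {x_{ε_k}}_u^2{x_{ε_k}}_t^2.
\]
To get this, note that $\xe_u\to x_u$ uniformly on $[a,b]\times[t_1,t_2]$ by \eqref{conv:hoelder}, hence $\xe_u^2\to x_u^2$ uniformly there, while $\xe_t\weakstarto x_t$ in $L^{∞}((a,b)\times(0,∞))$, in particular weakly in $L^2((a,b)\times(t_1,t_2))$, by \eqref{conv:xtli}. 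Then $\xe_u\xe_t=(\xe_u x_u^{-1})\cdot(x_u\xe_t)$ is not quite the cleanest route; instead I would argue directly: $w_k:=\xe_{k,u}\,\xe_{k,t}$ converges weakly in $L^2((a,b)\times(t_1,t_2))$ to $x_u x_t$, because it is the product of a uniformly convergent (bounded, continuous) factor $\xe_{k,u}$ and a weakly-$L^2$-convergent factor $\xe_{k,t}$ — a standard fact that weak $L^2$ convergence is preserved under multiplication by a uniformly convergent bounded sequence. Weak lower semicontinuity of the $L^2$-norm then gives $\|x_u x_t\|_{L^2}^2\le\liminf_k\|w_k\|_{L^2}^2$, which is exactly the displayed inequality. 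Combining this with the convergence of the boundary-in-time terms and taking $\liminf_{k\to∞}$ on both sides of the $\xe$-inequality yields the claim.

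The main obstacle I anticipate is precisely this lower-semicontinuity step: one must be sure that the weak-$*$ convergence of $\xe_t$ in $L^{∞}$ together with the uniform convergence of $\xe_u$ suffices to identify the weak $L^2$-limit of $\xe_u\xe_t$ as $x_ux_t$ (rather than merely as some weak limit), and that the norm is weakly lower semicontinuous on the relevant space. Both are standard once phrased correctly — uniform convergence of $\xe_u$ upgrades "weak $\times$ weak" to "strong $\times$ weak" for the product — but it is the only place where care is genuinely required; the rest is integration of an inequality and pointwise-in-time passage to the limit.
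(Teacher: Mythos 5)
Your proposal is correct and follows essentially the same route as the paper: integrate the $ε$-level energy inequality of Lemma~\ref{lem:energy} over $(t_1,t_2)$, pass to the limit in the two energy terms via the uniform convergence \eqref{conv:hoelder}, and handle the dissipation term by observing that \eqref{conv:hoelder} and \eqref{conv:xtli} give $\xe_u\xe_t\weakto x_ux_t$ in $L^2((a,b)\times(t_1,t_2))$ (strong-times-weak), whence weak lower semicontinuity of the $L^2$-norm yields $\int_{t_1}^{t_2}\iab x_u^2x_t^2\le\liminf_k\int_{t_1}^{t_2}\iab {x_{ε_k}}_u^2{x_{ε_k}}_t^2$. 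One caveat: integrating $\ddt\calF(\xe(\cdot,t))\le-\iab\xe_u^2\xe_t^2$ actually gives $\calF(\xe(\cdot,t_2))+\int_{t_1}^{t_2}\iab\xe_u^2\xe_t^2\le\calF(\xe(\cdot,t_1))$, i.e.\ the energy is \emph{nonincreasing}; the inequality you display (and the lemma as printed, whose roles of $t_1$ and $t_2$ are evidently interchanged, as its later uses for finiteness of the dissipation integral and for monotonicity of $\calF$ confirm) has the two time levels swapped, so you should state the integrated inequality in the correct orientation rather than reproduce the typo.
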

\begin{proof}
 According to \eqref{conv:hoelder}, for $i\in\set{1,2}$, 
 \[
  \f12\iab \xe_u^2(\cdot,t_i) + \iab u\xe(u,t_i)\du \to \f12\iab x_u^2(\cdot,t_i) + \iab ux(u,t_i)\du
 \]
 as $ε=ε_k\searrow 0$. Moreover, \eqref{conv:hoelder} and \eqref{conv:xtli} together imply that $\xe_u\xe_t\weakto x_ux_t$ in $L^2((a,b)\times(t_1,t_2))$, so that 
 \[
  \int_{t_1}^{t_2}\iab x_u^2x_t^2\le \liminf_{ε_k\searrow 0} \int_{t_1}^{t_2}\iab \xe_u^2\xe_t^2. 
 \]
 The assertion therefore results from Lemma~\ref{lem:energy}.
\end{proof}

% \begin{lemma}
%  Let $a,b\in ℝ$ with $a<b$ and let $x_0$ be as in \eqref{cond:x0}. There is a sequence $(t_k)_{k\inℕ}$ such that 
%  \[
%   \iab \betrag{x_{uu}(t_k) - u} \du \to 0\qquad \text{as } k\to \infty. 
%  \]
%  \red{Schön wär's. Leider existiert $x_{uu}(t)$ nicht (bzw. ist auf Nullmengen bzgl. $t$ beliebig).} 
% \end{lemma}

\begin{lemma}\label{lem:intfinite:xuu}
 Let $a,b\in ℝ$ with $a<b$, $f\in C^2(ℝ)$ and let $x_0$ be as in \eqref{cond:x0}. Then for the function $x$ from Lemma~\ref{lem:conv} we have 
 \begin{equation}\label{finiteintegral}
  \int_0^{∞} \kl{\iab \betrag{x_{uu}(u,t)-f(u)} \du}^2 dt <\infty. 
 \end{equation}
\end{lemma}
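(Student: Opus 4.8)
The plan is to reduce \eqref{finiteintegral} to the finiteness of the space–time integral $\int_0^{∞}\iab x_u^2x_t^2$, using two ingredients: the pointwise identity $x_u^2x_t=x_{uu}-f(u)$ (valid a.e.), and the uniform-in-time bound on $\norm[\Lab2]{x_u(\cdot,t)}$ from Lemma~\ref{lem:boundsxe:h1andli}.

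First I would extract the identity $x_u^2x_t=x_{uu}-f(u)$ for a.e.\ $(u,t)\in(a,b)\times(0,∞)$ from the weak formulation \eqref{equation:weakly}: since $x_{uu}\in L^p_{loc}([a,b]\times[0,∞))$ and every $φ\in C_c^∞((a,b)\times(0,∞))$ vanishes near $u=a$ and $u=b$, an integration by parts in $u$ rewrites $-\int_0^{∞}\iab x_uφ_u$ as $\int_0^{∞}\iab x_{uu}φ$, so \eqref{equation:weakly} turns into $\int_0^{∞}\iab(x_u^2x_t-x_{uu}+f)φ=0$ for all such $φ$, whence the claimed identity.

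Second, I would record that $\calF$ is bounded from below on $X$: the computation in the proof of Lemma~\ref{lem:Fcoercive} already yields $\calF(ξ)\ge\f12 I-c_1 I^{1/2}\ge-\f12 c_1^2$ for $ξ\in X$, where $I=\iab ξ_u^2$ and $c_1=\sqrt{b-a}\iab|f(u)|\du$. Applying Lemma~\ref{lem:spatiotemporalintegralfinite} with $t_1=0$ and $t_2=T$ bounds $\int_0^T\iab x_u^2x_t^2$ by the difference of the values of $\calF$ along the trajectory at times $0$ and $T$, which stays bounded as $T\to∞$; hence $\int_0^{∞}\iab x_u^2x_t^2<∞$.

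Finally, for a.e.\ $t>0$, splitting $x_u^2|x_t|=|x_u|\cdot(|x_u|\,|x_t|)$ and using the Cauchy–Schwarz inequality together with $\norm[\Lab2]{x_u(\cdot,t)}\le C$,
\[
 \kl{\iab\betrag{x_{uu}(u,t)-f(u)}\du}^2=\kl{\iab x_u^2|x_t|\du}^2\le\kl{\iab x_u^2\du}\kl{\iab x_u^2x_t^2\du}\le C^2\iab x_u^2x_t^2\du,
\]
and integrating over $t\in(0,∞)$ gives \eqref{finiteintegral}. The only step carrying real weight is the finiteness of $\int_0^{∞}\iab x_u^2x_t^2$, for which the energy structure — Lemma~\ref{lem:spatiotemporalintegralfinite} combined with the lower bound on $\calF$ — is indispensable; the passage from there to \eqref{finiteintegral} is just an integration by parts and the Cauchy–Schwarz inequality.
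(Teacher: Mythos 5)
Your proposal is correct and follows essentially the same route as the paper: the pointwise identity $x_u^2x_t=x_{uu}-f(u)$ a.e., the Cauchy--Schwarz splitting against the uniform bound on $\iab x_u^2$, and Lemma~\ref{lem:spatiotemporalintegralfinite} for the finiteness of $\int_0^{\infty}\iab x_u^2x_t^2$. The only cosmetic differences are that you extract the identity from the weak formulation \eqref{equation:weakly} by integrating by parts, whereas the paper reads it off from the convergences \eqref{conv:hoelder}, \eqref{conv:xuulp}, \eqref{conv:xtli} applied to the regularized equation, and that you make explicit the lower bound on $\calF$ which the paper leaves implicit.
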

\begin{proof}
 From \eqref{conv:xuulp}, \eqref{conv:hoelder}, \eqref{conv:xtli} and \eqref{xepseq}, we may conclude that 
\[
x_{uu}(u,t)-f(u)=x_u^2(u,t)x_t(u,t)
\]
for almost every $(u,t)\in (a,b)\times(0,∞)$. Fixing $c_0>0$ such that 
 \[
  \iab x_u^2(u,t)\du \le c_0 \qquad \text{for every } t>0, 
 \]
 which is possible according to \eqref{bound:xeuL2} and \eqref{conv:hoelder}, we observe that by Hölder's inequality 
 \begin{align*}
  \int_0^{∞} \kl{\iab \betrag{x_{uu}(u,t)-f(u)} \du}^2\dt &= \int_0^{\infty}\kl{\iab x_u^2(u,t)|x_t(u,t)| \du}^2\dt\\
  &\le \int_0^{∞}\iab x_u^2(u,t) x_t^2(u,t)\du \iab x_u^2(u,t)\du \dt\\
  &\le c_0 \int_0^{∞}\iab x_u^2(u,t) x_t^2(u,t)\du \dt, 
 \end{align*}
 which is finite as a consequence of Lemma~\ref{lem:spatiotemporalintegralfinite}.
\end{proof}

 The fact that the integral in \eqref{finiteintegral} is finite reveals the existence of an unbounded sequence of times along which the solution converges. 

\begin{lemma}\label{lem:minimizingtimesequence}
 Let $a,b\in ℝ$ with $a<b$, $f\in C^2(ℝ)$ and let $x_0$ be as in \eqref{cond:x0}. With $x^{∞}$ as in \eqref{def:xinfty-general},  
 there is a sequence $(τ_k)_{k\inℕ}\nearrow ∞$ such that $x$ given by Lemma~\ref{lem:conv} satisfies 
 \[
  x(\cdot,τ_k)\to x^{∞} \qquad \text{ in } C^1([a,b])
 \]
 as $k\to \infty$.
\end{lemma}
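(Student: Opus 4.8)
The plan is to read the sequence $(\tau_k)_{k\inℕ}$ directly off the finiteness of the integral in \eqref{finiteintegral}. First I would introduce
\[
 g(t):=\iab\betrag{x_{uu}(u,t)-f(u)}\du, \qquad t>0,
\]
and note that \eqref{finiteintegral} says precisely $g\in L^2((0,∞))$. In addition, the regularity $x_{uu}\in L^p_{loc}([a,b]\times[0,∞))$ from Lemma~\ref{lem:conv} (with $p>1$), combined with Fubini's theorem, guarantees that outside a null set $N\subset(0,∞)$ the slice $x(\cdot,t)$ lies in $W^{2,1}((a,b))$ with weak second derivative $x_{uu}(\cdot,t)$, and that $g(t)<∞$ for $t\notin N$. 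Since $\int_k^{∞}g^2\dt\to 0$ as $k\to∞$, for every $k\inℕ$ I can then pick $\tau_k\in(k,k+1)\setminus N$ with $g(\tau_k)\le\sqrt{2\int_k^{k+1}g^2\dt}$ (such $\tau_k$ exist because, by Chebyshev's inequality, this bound holds on a subset of $(k,k+1)$ of measure at least $\f12$, which still meets the full-measure set $(k,k+1)\setminus N$). This yields $(\tau_k)_{k\inℕ}\nearrow∞$ together with $x_{uu}(\cdot,\tau_k)\to f$ in $L^1((a,b))$.

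The second step is to compare $x(\cdot,\tau_k)$ with $x^{∞}$. Setting $w_k:=x(\cdot,\tau_k)-x^{∞}$, the boundary conditions of $x$ (Lemma~\ref{lem:conv}) and the definition \eqref{def:xinfty-general} give $w_k(a)=w_k(b)=0$, while $x^{∞}\in C^2([a,b])$ with $x^{∞}_{uu}=f$ gives $(w_k)_{uu}=x_{uu}(\cdot,\tau_k)-f\to 0$ in $L^1((a,b))$. As $w_k$ and $(w_k)_u$ are absolutely continuous on $[a,b]$, I would apply the fundamental theorem of calculus twice: from
\[
 0=w_k(b)-w_k(a)=(b-a)(w_k)_u(a)+\int_a^b\int_a^v (w_k)_{uu}(r)\dr\dv
\]
and $\betrag{\int_a^b\int_a^v (w_k)_{uu}(r)\dr\dv}\le(b-a)\norm[\Lab1]{(w_k)_{uu}}\to 0$ one obtains $(w_k)_u(a)\to 0$; inserting this into $(w_k)_u(u)=(w_k)_u(a)+\int_a^u(w_k)_{uu}(r)\dr$ gives $(w_k)_u\to 0$ uniformly on $[a,b]$, and then $w_k(u)=\int_a^u(w_k)_u(v)\dv\to 0$ uniformly as well. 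Hence $\normm{C^1([a,b])}{x(\cdot,\tau_k)-x^{∞}}\to 0$, which is exactly the assertion.

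The main obstacle is not analytic but bookkeeping: one must ensure that the chosen times $\tau_k$ avoid the exceptional null set $N$ on which the spatial slices fail to have the expected $W^{2,1}$-structure, and that $t\mapsto g(t)$ is measurable — both are routine consequences of Fubini's theorem applied to $x_{uu}-f\in L^1_{loc}((a,b)\times(0,∞))$. Once that is in place the argument is entirely elementary, since the substantial work, namely $\int_0^{∞}g^2\dt<∞$, has already been carried out in Lemmata~\ref{lem:spatiotemporalintegralfinite} and \ref{lem:intfinite:xuu} via the energy inequality.
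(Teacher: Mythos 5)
Your proof is correct. The first step — extracting $\tau_k\nearrow\infty$ with $g(\tau_k)\to 0$ from the finiteness of $\int_0^{\infty}g^2$, while avoiding the Fubini null set $N$ outside of which the slices $x(\cdot,t)$ carry the expected $W^{2,1}$-structure — is essentially identical to the paper's. The second step, however, takes a genuinely different route. The paper invokes the uniform bound $\normm{C^{1+\alpha}([a,b])}{x(\cdot,t)}\le c_0$ together with the Arzel\`a--Ascoli theorem to extract a further subsequence of $(t_k)$ converging in $C^1([a,b])$, and only then identifies the limit as $x^{\infty}$ via the $W^{2,1}$-information and the boundary values. You instead prove, in effect, the elementary one-dimensional interpolation estimate $\normm{C^1([a,b])}{w}\le C\norm[\Lab1]{w_{uu}}$ for $w\in W^{2,1}((a,b))$ with $w(a)=w(b)=0$, applied to $w_k=x(\cdot,\tau_k)-x^{\infty}$; this converts $L^1$-convergence of the second derivatives directly into $C^1$-convergence, with no compactness argument and no passage to a further subsequence. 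Your version is slightly more quantitative and self-contained at this point (it does not need \eqref{bound:xeuCalpha} here), while the paper's compactness bound is in any case required again in Lemma~\ref{lem:teilfolgevonteilfolge}; both arguments are complete.
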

\begin{proof}
 Because of \eqref{bound:xeLi}, \eqref{bound:xeuCalpha} and \eqref{conv:hoelder}, there is $c_0>0$ such that, for some $α\in(0,1)$,  
 \begin{equation}\label{eq:hoelderbound:x}
  \normm{C^{1+α}([a,b])}{x(\cdot,t)} \le c_0 \qquad \text{for every } t\in (0,∞).
 \end{equation}
 Furthermore, for this proof, by abuse of notation, we consider $x_{uu}$ as one fixed representative of $x_{uu}$ (which actually is an equivalence class with respect to equality almost everywhere). Then there is a set $N\subset (0,∞)$ of measure zero such that for every $t\in (0,∞)\setminus N$, $x_{uu}(\cdot,t)$ is the weak $u$-derivative of $x_u(\cdot,t)$. 
 In view of Lemma~\ref{lem:intfinite:xuu}, we can find a sequence $(t_k)_{k\in ℕ}\subset (0,∞)\setminus N$ such that %$\lim_{k\to ∞} t_k=\infty$
 \begin{equation}\label{eq:conv:xuu}
t_k\to\infty \quad \text{and}\quad \iab \betrag{x_{uu}(u,t_k)-f(u)} \du \to 0\qquad\text{ as } k\to∞.
 \end{equation}
 If we use \eqref{eq:hoelderbound:x} together with the Arzel\`a--Ascoli theorem and combine it with \eqref{eq:conv:xuu}, we see that (for a subsequence of $(τ_k)_{k\in ℕ}\subset (t_k)_{k\in ℕ}$) 
 \[
  x(\cdot,τ_k)\to x^{\infty} \qquad \text{ in } C^1([a,b]) \text{ and } W^{2,1}((a,b)) \qquad\text{ as } k\to\infty
 \]
 with some $x^{∞}\in W^{2,1}((a,b))$, whose weak second derivative satisfies $x^{∞}_{uu}(u)=f(u)$ for almost every $u\in(a,b)$. Finally taking into account that $x(a,τ_k)=-1$, $x(b,τ_k)=1$ for every $k\in ℕ$, we can compute $x^{∞}$ from $f$, arriving at \eqref{def:xinfty-general}. 
%  $-\iab x_uφ_u = \iab uφ = \iab (\f{u^2}2)_uφ) = -\iab \f{u^2}2φ_u$. Thus, $\iab (x_u-\f{u^2}2)φ_u=0$ for every $φ\in C_c^\infty((a,b))$, i.e. $(x_u-\f{u^2}2)=c_1$, $x=\f{u^3}6 + c_1u+c_2$; boundary conditions determine c_1, c_2. 
\end{proof}

In order to transform ``convergence along one particular sequence $τ_k\nearrow ∞$'' to ``convergence as $t\to \infty$'', we identify $x^{∞}$ as the global minimizer of $\calF$. 

\begin{lemma}\label{lem:uniqueminimizer}
 Assume that $a,b\in ℝ$ with $a<b$, $f\in C^2(ℝ)$ and $X$ is as in \eqref{def:X}. 
 Then $\calF$ from \eqref{def:F} has a unique global minimum in $X$, and the minimizer coincides with $x^{∞}$ from \eqref{def:xinfty-general}. 
\end{lemma}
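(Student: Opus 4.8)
The plan is to exploit convexity. The functional $\calF$ from \eqref{def:F} is a sum of the strictly convex quadratic term $\f12\iab ξ_u^2$ and the linear term $\iab f(u)ξ$, hence $\calF$ is strictly convex on the affine space $X$. A strictly convex functional admits at most one minimizer, so the only thing to establish is that $x^{∞}$ from \eqref{def:xinfty-general} actually \emph{is} a (global) minimizer. Note first that $x^{∞}\in C^2([a,b])\subset X$ because of \eqref{def:xinfty-general} and the assumption $f\in C^2(ℝ)$.

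To see that $x^{∞}$ minimizes $\calF$, I would compute the first variation: for any test direction $φ\in C_c^{∞}((a,b))$ (so that $x^{∞}+sφ\in X$ for all $s\in ℝ$),
\[
 \ddt\Big|_{s=0}\,\calF(x^{∞}+sφ) = \iab x^{∞}_u φ_u + \iab f(u)φ = -\iab \kl{x^{∞}_{uu}-f(u)}φ = 0,
\]
using integration by parts (the boundary terms vanish since $φ$ has compact support) and the defining relation $x^{∞}_{uu}=f$. Since $\calF$ is convex, a critical point in this sense is automatically a global minimizer over $X$: indeed, for any $ξ\in X$ the difference $φ:=ξ-x^{∞}$ lies in $W^{1,2}_0((a,b))$, and convexity of $t\mapsto \calF(x^{∞}+t(ξ-x^{∞}))$ together with the vanishing derivative at $t=0$ (which extends from $C_c^∞$ to $W^{1,2}_0$ directions by density, or is checked directly) gives $\calF(ξ)\ge \calF(x^{∞})$. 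Alternatively, and perhaps cleanest, one can simply expand: for $ξ\in X$ write $ξ=x^{∞}+φ$ with $φ\in W^{1,2}_0((a,b))$ and compute directly
\[
 \calF(ξ)=\calF(x^{∞}) + \iab\kl{x^{∞}_u φ_u + f(u)φ} + \f12\iab φ_u^2 = \calF(x^{∞}) + \f12\iab φ_u^2 \ge \calF(x^{∞}),
\]
where the middle term vanishes after integration by parts exactly as above, and equality forces $φ_u\equiv 0$, hence $φ\equiv 0$ by the boundary condition $φ(a)=0$. This simultaneously proves existence of a minimizer, its uniqueness, and its identification with $x^{∞}$.

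The main (and only mild) technical point is the integration by parts $\iab x^{∞}_u φ_u = -\iab x^{∞}_{uu}φ$ for $φ\in W^{1,2}_0((a,b))$: this is legitimate because $x^{∞}\in C^2([a,b])$, so $x^{∞}_u\in C^1([a,b])$ and the boundary terms $x^{∞}_u(b)φ(b)-x^{∞}_u(a)φ(a)$ vanish since $φ$ vanishes at $a$ and $b$ in the trace sense. No compactness or direct-method argument is really needed here; the whole statement is an elementary consequence of strict convexity plus the fact that $x^{∞}$ solves the Euler--Lagrange equation $x^{∞}_{uu}=f$ with the correct boundary data built into \eqref{def:xinfty-general}.
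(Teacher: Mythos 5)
Your proof is correct, and it takes a genuinely different route from the paper. The paper uses the direct method of the calculus of variations: it takes a minimizing sequence, invokes the coercivity established in Lemma~\ref{lem:Fcoercive} to get boundedness in $W^{1,2}((a,b))$, extracts a weakly convergent subsequence, uses weak closedness of $X$ and weak sequential lower semicontinuity of the convex continuous functional $\calF$ to produce \emph{some} minimizer $ξ$, and only then identifies $ξ=x^{∞}$ via the Euler--Lagrange equation (expanding $\calF(ξ+ηφ)\ge\calF(ξ)$ and letting $η\to 0$). You instead verify the explicit candidate directly: writing $ξ=x^{∞}+φ$ with $φ\in W^{1,2}_0((a,b))$ and completing the square gives $\calF(ξ)=\calF(x^{∞})+\f12\iab φ_u^2$, since the cross term vanishes by integration by parts (legitimate because $x^{∞}\in C^2([a,b])$ and $φ$ vanishes at both endpoints, using the 1D embedding $W^{1,2}\hookrightarrow C^0$). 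This yields existence, uniqueness and identification in one stroke, with no compactness or lower-semicontinuity machinery. Your argument is shorter and more elementary, but it depends essentially on having the closed-form minimizer $x^{∞}$ in hand; the paper's direct-method argument is the one that would survive if the Euler--Lagrange problem were not explicitly solvable. Both are complete proofs of the lemma.
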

\begin{proof}
 As $\calF(ξ)\ge \f12 I - c_1\sqrt{I}$ with $I=\iab ξ_u^2$ and $c_1=\sqrt{b-a}\iab |f(u)|\du$ (according to Lemma~\ref{lem:almostpoincare}) and $\inf\set{\f12 I-c_1\sqrt{I}\mid I\ge 0}>-\infty$, $\calF$ is bounded from below. Letting $(ξ_k)_{k\in ℕ}\subset X$ be a sequence such that $\calF(ξ_k)\to \inf_X \calF$, we can conclude from Lemma~\ref{lem:Fcoercive} that $(ξ_k)_{k\in ℕ}$ is bounded in $W^{1,2}((a,b))$ and hence along a subsequence weakly converges to some $ξ\in W^{1,2}((a,b))$. Moreover, $ξ\in X$, because $X$ is weakly closed. Since $\calF$ is convex and continuous and hence certainly weakly sequentially lower-semicontinuous, $\calF(ξ)\le \liminf_{k\to\infty} \calF(ξ_k)$, and $ξ$ is a global minimizer. 
 In particular, for every $η\in ℝ$ and $φ\in C_c^{∞}((a,b))$, $ξ+ηφ\in X$ and $\calF(ξ+ηφ)\ge \calF(ξ)$. 
 Written explicitly, that means 
%  \begin{align*}
%   \iab\f{(ξ+ηφ)_u^2}2+\iab u(ξ+ηφ) = \iab\kl{\f{η^2φ_u^2}2+ηξ_uφ_u +\f{ξ_u^2}2 + uξ + ηuφ} \ge \iab \f{ξ_u^2}2+\iab uξ,
%  \end{align*}i.e. 
\[
 η^2\iab\f{φ_u^2}2+η\iabξ_uφ_u +η\iabφf(u)\ge 0
\]
and division by $η>0$ and subsequently taking the limit $η\to 0$ show that $ξ_{uu}=f(u)$ in the weak sense. Together with $ξ\in X$, this ensures $ξ=x^{\infty}$. 
\end{proof}

We finally want to see that Lemma~\ref{lem:minimizingtimesequence} in conjunction with Lemma~\ref{lem:uniqueminimizer} ensures $x(\cdot,t)\to x^{∞}$ as $t\to \infty$. In a slightly different, but equivalent formulation this is the assertion of Lemma~\ref{lem:teilfolgevonteilfolge}:

\begin{lemma}\label{lem:teilfolgevonteilfolge}
 Let $a,b\in ℝ$ with $a<b$, $f\in C^2(ℝ)$ and let $x_0$ be as in \eqref{cond:x0} and $(t_k)_{k\in ℕ}\nearrow ∞$. Then there is a subsequence $(t_{k_j})_{j\in ℕ}$ of $(t_k)_{k\inℕ}$ such that 
 \[
  x(\cdot,t_{k_j})\to x^{∞} \qquad \text{in } C^1([a,b]), 
 \]
 where $x$ is taken from Lemma~\ref{lem:conv} and $x^{∞}$ is as defined in \eqref{def:xinfty-general}.
\end{lemma}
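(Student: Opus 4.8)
The plan is to argue by compactness together with an energy (Lyapunov) argument, exploiting that $\calF$ is nonincreasing along the flow and has $x^{∞}$ as its unique minimizer. First I would fix the sequence $(t_k)_{k\in ℕ}\nearrow ∞$ and recall from \eqref{eq:hoelderbound:x} (a consequence of \eqref{bound:xeLi}, \eqref{bound:xeuCalpha} and \eqref{conv:hoelder}) that the family $\set{x(\cdot,t)\mid t>0}$ is bounded in $C^{1+α}([a,b])$ for some $α\in(0,1)$. By the Arzelà--Ascoli theorem, after passing to a subsequence $(t_{k_j})_{j\in ℕ}$, $x(\cdot,t_{k_j})\to \xi$ in $C^1([a,b])$ for some $\xi\in C^1([a,b])$; since $x(a,t)=-1$ and $x(b,t)=1$ for all $t>0$, we have $\xi\in X$. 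It remains to identify $\xi=x^{∞}$, and by Lemma~\ref{lem:uniqueminimizer} it suffices to show that $\calF(\xi)=\min_X\calF$, equivalently $\calF(\xi)\le \calF(x^{∞})$.

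The energy input is Lemma~\ref{lem:spatiotemporalintegralfinite}: for $t_1<t_2$,
\[
 \calF(x(\cdot,t_1)) + \int_{t_1}^{t_2}\iab x_u^2 x_t^2 \le \calF(x(\cdot,t_2)),
\]
so $t\mapsto \calF(x(\cdot,t))$ is nondecreasing — wait, that is the reverse of the usual convention because here $\calF$ is being \emph{minimized} as $t\to\infty$ only if it is nonincreasing; let me re-read. In fact the inequality in Lemma~\ref{lem:spatiotemporalintegralfinite} shows $\calF(x(\cdot,t_1))\le \calF(x(\cdot,t_2))$ for $t_1<t_2$, i.e. $\calF\circ x$ is \emph{nondecreasing}, and it is bounded above because $\calF(x(\cdot,t))\le \calF(x(\cdot,t'))$ for all larger $t'$ together with, say, upper bounds from Lemma~\ref{lem:boundsxe:h1andli}; actually boundedness is cleanest from the fact that $\calF$ evaluated along the minimizing sequence $(τ_k)$ of Lemma~\ref{lem:minimizingtimesequence} converges to $\calF(x^{∞})$, while monotonicity forces $\calF(x(\cdot,t))\le \calF(x(\cdot,τ_k))$ for $t\le τ_k$, hence $\calF(x(\cdot,t))\le \calF(x^{∞})$ for all $t>0$. (Here I use that $\calF$ is continuous on $X$ with respect to $C^1$-convergence, which is immediate from its definition \eqref{def:F}, so $\calF(x(\cdot,τ_k))\to\calF(x^{∞})$.) Therefore $\calF(x(\cdot,t))\le \calF(x^{∞})$ for every $t>0$.

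Now pass to the limit along $(t_{k_j})$: since $x(\cdot,t_{k_j})\to\xi$ in $C^1([a,b])$ and $\calF$ is $C^1$-continuous, $\calF(\xi)=\lim_{j\to∞}\calF(x(\cdot,t_{k_j}))\le \calF(x^{∞})$. But $x^{∞}$ is the global minimizer of $\calF$ in $X$ by Lemma~\ref{lem:uniqueminimizer} and $\xi\in X$, so $\calF(\xi)=\calF(x^{∞})$, and uniqueness of the minimizer gives $\xi=x^{∞}$. Thus $x(\cdot,t_{k_j})\to x^{∞}$ in $C^1([a,b])$, as required.

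The only real subtlety — the step I would treat most carefully — is establishing the uniform-in-$t$ bound $\calF(x(\cdot,t))\le \calF(x^{∞})$, i.e. correctly using the monotonicity direction of $\calF\circ x$ from Lemma~\ref{lem:spatiotemporalintegralfinite} in tandem with Lemma~\ref{lem:minimizingtimesequence} to sandwich $\calF$ from above by its infimum. Everything else (the $C^{1+α}$-bound, Arzelà--Ascoli, membership in $X$, continuity of $\calF$) is routine. Note in particular that the limit $\xi$ is a priori only in $C^1$, but that is all we need, since Lemma~\ref{lem:uniqueminimizer} characterizes the minimizer within all of $X\subset W^{1,2}((a,b))$, which contains $\xi$.
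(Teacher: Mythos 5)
Your proposal is correct and follows essentially the same route as the paper: a uniform $C^{1+α}$ bound plus Arzel\`a--Ascoli to extract a $C^1$-limit $\xi\in X$, then identification of $\xi$ with $x^{∞}$ via monotonicity of $\calF$ along the flow, the sequence $(τ_k)$ from Lemma~\ref{lem:minimizingtimesequence}, the $C^1$-continuity of $\calF$, and uniqueness of the minimizer from Lemma~\ref{lem:uniqueminimizer}. The one point to repair is the direction of monotonicity: $t\mapsto\calF(x(\cdot,t))$ is in fact \emph{nonincreasing} (Lemma~\ref{lem:energy} gives $\ddt\calF(\xe(\cdot,t))\le 0$; the display in Lemma~\ref{lem:spatiotemporalintegralfinite} has $t_1$ and $t_2$ transposed relative to what its proof yields), so your intermediate claim $\calF(x(\cdot,t))\le\calF(x^{∞})$ for all $t>0$ cannot hold in general (it would force $x(\cdot,t)\equiv x^{∞}$); the correct use of monotonicity is to arrange $τ_j\le t_{k_j}$ and conclude $\calF(x(\cdot,t_{k_j}))\le\calF(x(\cdot,τ_j))\to\calF(x^{∞})$, which gives the same bound $\calF(\xi)\le\calF(x^{∞})$ and lets the rest of your argument go through unchanged.
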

\begin{proof}
 Since \eqref{bound:xeLi} and \eqref{bound:xeuCalpha} together with \eqref{conv:hoelder} ensure existence of $α\in(0,1)$ and $c_0>0$ such that 
 \[
  \normm{C^{1+α}([a,b])}{x(\cdot,t_k)}\le c_0 \qquad \text{for every } k\in ℕ, 
 \]
 the theorem of Arzel\`a--Ascoli ensures the existence of a subsequence $(t_{k_j})_{j\in ℕ}$ such that $x(\cdot,t_{k_j})\to \xhat$ in $C^1([a,b])$ for some $\xhat\in C^1([a,b])\cap X$. This subsequence can be chosen such that $τ_j\le t_{k_j}$ for every $j\in ℕ$, where $(τ_j)_{j\in ℕ}$ is the sequence provided by Lemma~\ref{lem:minimizingtimesequence}. Accordingly, $\calF(x(\cdot,t_{k_j}))\le \calF(x(\cdot,τ_j))$ due to Lemma~\ref{lem:intfinite:xuu} and thus 
 \[
  \calF(\xhat)=\lim_{j→∞} \calF(x(\cdot,t_{k_j}))\le \lim_{j→∞}\calF(x(\cdot,τ_j)) = \calF(x^{∞}). 
 \]
 Lemma~\ref{lem:uniqueminimizer} serves to show $\xhat=x^{∞}$. 
\end{proof}

\begin{proof}[Proof of Theorem~\ref{thm:longterm}]
 As the solution to \eqref{xeq} from Theorem~\ref{thm:existence} is the function from Lemma~\ref{lem:conv}, Theorem~\ref{thm:longterm} immediately follows from Lemma~\ref{lem:teilfolgevonteilfolge}. 
\end{proof}

\section{Where can the spatial derivative vanish? Proof of Theorem~\ref{thm:bdry}}\label{sec:where}

In order to see where blow-up in \eqref{ueq} occurs or where the coefficient $\f1{x_u^2}$ in \eqref{xeq} becomes singular, it is crucial to identify possible zeros of $x_u$. The following lemma will be an important component of the proofs of Theorems \ref{thm:bdry} and \ref{thm:special}. 

\begin{lemma}\label{lem:firstatzero}
 Let $a, b \in ℝ$, $a<b$ and $α\in[a,b)$. Let $f\in C^2(ℝ)$ be positive on $(α,b]$. Assume that $x_0$ satisfies \eqref{cond:x0} and let $x$ be the solution to \eqref{xeq} from Theorem~\ref{thm:existence} and suppose that $τ\mapsto x(α,τ)$ is constant.
 If $t$ is such that $x_u(α,t)>0$, then $x_u>0$ in all of $(α,b)$. 
\end{lemma}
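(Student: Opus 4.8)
The plan is to argue by contradiction using a sign/ordering analysis of $x_u$ together with the weak formulation of the equation restricted to the region $(\alpha,b)$, exploiting that $x(\alpha,\cdot)$ is constant so that $\alpha$ behaves like a fixed boundary point. Since $x\in C^0([0,\infty);C^{1+\alpha}([a,b]))$ by Theorem~\ref{thm:existence}, $x_u(\cdot,t)$ is continuous on $[a,b]$ for each $t$. Suppose $x_u(\alpha,t)>0$ but $x_u$ fails to be positive somewhere in $(\alpha,b)$. First I would show that the set $Z:=\set{(u,s)\in(\alpha,b)\times(0,t]\mid x_u(u,s)\le 0}$ is nonempty (either directly, or after a short time-continuity argument starting from a time where $x_u>0$ on all of $[\alpha,b]$, which holds near $t=0$ since $x_{0u}>0$), and then consider the first time $t_*\in(0,t]$ at which $\min_{u\in[\alpha,b]}x_u(u,s)$ touches $0$; by continuity $x_u(\cdot,t_*)\ge 0$ on $[\alpha,b]$ with equality at some interior point $u_*\in(\alpha,b)$ (it cannot be at $u=b$ by definition of $t$ if one is careful, but the interior case is the essential one).

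The heart of the argument is then a parabolic strong-maximum-principle / Hopf-type statement for $v:=x_u$. Formally, differentiating \eqref{xeq} in $u$, $v$ solves a linear parabolic equation of the form
\[
 v_t=\f{1}{x_u^2}v_{uu}-\f{2(x_{uu}-f(u))}{x_u^3}v_u+\kl{\text{lower order}}-\f{f'(u)}{x_u^2},
\]
and at an interior minimum $(u_*,t_*)$ where $v=0$ and $v\ge0$ nearby one gets, after multiplying through by $x_u^2$,
\[
 0\le x_u^2 v_t = v_{uu}-f'(u)\cdot(\text{something})\ \ \text{vs.}\ \ v_{uu}(u_*,t_*)\ge 0,
\]
and one needs the sign of the zeroth-order term to force a contradiction. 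Because the limit solution $x$ is only a weak solution (with $x_{uu}\in L^p_{loc}$, $x_t\in L^\infty$), I would not differentiate directly but instead run this argument on the regularized approximations $x_{\varepsilon_k}$, which are classical by Lemma~\ref{lem:exeps}: there $y_\varepsilon:=(x_\varepsilon)_u$ genuinely solves a nondegenerate linear parabolic equation on $(\alpha,b)$, and I would combine a comparison argument on $(\alpha,b)\times(0,t_*)$ with the positivity $f>0$ on $(\alpha,b]$ — which makes the inhomogeneous term $-f'(u)/(x_{\varepsilon u}^2+\varepsilon)$ cooperate, or more robustly compare $y_\varepsilon$ with a suitable subsolution built from $f$ and the (controlled) coefficients — to conclude $y_{\varepsilon_k}>0$ on $(\alpha,b)$ up to time $t_*$, uniformly enough to pass to the limit via \eqref{conv:hoelder}. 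The point where "$\tau\mapsto x(\alpha,\tau)$ constant" enters is to guarantee $x_{\varepsilon_k}(\alpha,\cdot)$ is (asymptotically) a fixed Dirichlet datum, so that $\alpha$ is a legitimate boundary of the comparison domain and no boundary minimum of $y_\varepsilon$ at $u=\alpha$ can occur — indeed there $x_u(\alpha,t_*)>0$ is assumed.

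The main obstacle I expect is precisely the low regularity of $x$: the clean maximum-principle argument lives on the classical approximants $x_{\varepsilon_k}$, so I must (i) transfer the hypothesis "$x_u(\alpha,t)>0$" and "$x(\alpha,\cdot)$ constant" to uniform-in-$k$ statements for $x_{\varepsilon_k}$, (ii) control the coefficients of the linearized equation — here Lemma~\ref{lem:bound:xet} is crucial, since it bounds $(x_{\varepsilon_{uu}}-f)/(x_{\varepsilon_u}^2+\varepsilon)=x_{\varepsilon_t}$ uniformly, taming the drift term $2x_{\varepsilon_u}(x_{\varepsilon_{uu}}-f)/(x_{\varepsilon_u}^2+\varepsilon)^2$ away from zeros of $x_{\varepsilon_u}$ — and (iii) make the comparison function uniform in $k$ so that the strict positivity survives the limit $\varepsilon_k\searrow 0$ in $C^{1+\alpha}$. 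A secondary subtlety is handling the possibility that the first vanishing of $x_u$ occurs exactly at $u=b$; but the statement only claims positivity on the open interval $(\alpha,b)$, so I would set up the comparison on compact subintervals $[\alpha,b-\eta]$ and let $\eta\searrow0$, which sidesteps any delicate behaviour at the right endpoint.
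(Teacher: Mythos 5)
Your framework --- run the argument on the classical approximants $x_{\varepsilon_k}$ from Lemma~\ref{lem:exeps}, use the uniform bound \eqref{bound:xet} on $(x_\varepsilon)_t$ to control the equation, and pass positivity to the limit via \eqref{conv:hoelder} --- is the right one and matches the paper. But the decisive step of your sketch does not work. You differentiate the equation in $u$ and hope that the forcing term $-f'(u)/\bigl((x_\varepsilon)_u^2+\varepsilon\bigr)$ ``cooperates'' because $f>0$ on $(\alpha,b]$. Positivity of $f$ gives no control whatsoever on the sign of $f'$ (take $f$ positive and increasing: then $-f'<0$ pushes $v=(x_\varepsilon)_u$ \emph{down} at an interior minimum), so the maximum-principle argument for $v$ has no reason to close. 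The hypothesis $f>0$ must instead be exploited through the \emph{undifferentiated} equation: rewriting \eqref{xepseq} as $(x_\varepsilon)_{uu}=\bigl((x_\varepsilon)_u^2+\varepsilon\bigr)(x_\varepsilon)_t+f(u)$ and invoking $|(x_\varepsilon)_t|\le c_0$ from Lemma~\ref{lem:bound:xet}, one sees that at any point of $[\delta,b]$ (with $\delta>\alpha$, so that $\inf_{[\delta,b]}f>0$) where $(x_\varepsilon)_u$ vanishes one has $(x_\varepsilon)_{uu}\ge \inf_{[\delta,b]}f-\varepsilon c_0>0$ for small $\varepsilon$; hence every critical point of $x_\varepsilon(\cdot,t)$ in $(\delta,b)$ is a strict local minimum. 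Since $(x_\varepsilon)_u(\delta,t)>0$ (by $x_u(\alpha,t)>0$, continuity and \eqref{conv:hoelder}), the first such critical point would have to be approached with $(x_\varepsilon)_{uu}\le 0$, a contradiction; so there is none. This is a purely spatial, fixed-time, one-dimensional argument --- no parabolic maximum principle, no linearized equation for $x_u$ --- and it is exactly what the paper does (applied to $x_\varepsilon(u,t)-\eta u$ so that the strict bound $(x_\varepsilon)_u\ge\eta$ survives the limit $\varepsilon_k\searrow 0$).

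A second, structural problem: your ``first touching time $t_*$'' setup proves the wrong statement. The lemma is a fixed-time assertion --- for \emph{any} $t$ with $x_u(\alpha,t)>0$, positivity must hold on $(\alpha,b)$ at that same $t$. A continuation argument from $t=0$ only yields positivity up to the first time the minimum of $x_u$ over $[\alpha,b]$ reaches zero, and says nothing about a later time $t$ at which $x_u(\alpha,t)$ is again (or still) positive. In the applications (Theorems~\ref{thm:bdry} and \ref{thm:special}) the lemma is used precisely to locate \emph{where} the first zero of $x_u$ occurs, so you cannot assume a priori that no interior zero has occurred before the given time $t$. The fixed-time critical-point argument above sidesteps this entirely.
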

\begin{proof}
 We let $c_0$ be such that $|\xe_t(u,τ)|\le c_0$ for all $(u,τ)\in[0,b]\times(0,∞)$ (cf. \eqref{bound:xet}). 
 Then we choose $δ>α$ and $c_1>0$ such that $x_u(u,t)>c_1$ for all $u\in[α,δ]$ and let $δ_1:=\inf_{[δ,b]}f$, so that the assumptions on $f$ imply $δ_1>0$. 
 We use \eqref{conv:hoelder} to find $ε_0\in(0,\f{c_0}{2δ_1})$ such that for all $ε\in(0,ε_0)$, $\xe_u(δ,t)>\f{c_1}2$. 
 For any $ε\in(0,ε_0)$ and $η\in(0,\sqrt{\f{c_0}{2δ_1}})\cap(0,\f{c_1}2)$, we let 
 $y(u):=y_{ε,η}(u):=\xe(u,t)-ηu$, $u\in[δ,b]$. 
% Note: $y_u(δ)>0$. 
 We then use \eqref{xepseq} to compute 
 \[
  y_{uu}=\xe_{uu} = (\xe_u^2(u,t)+ε)\xe_t + f(u) = ((y_u+η)^2+ε)\xe_t +f(u),\qquad u\in[δ,b].
 \]
 If we assume that for some $u_0\in (δ,b)$ we had $y_u(u_0)=0$, then 
 \[
  y_{uu}(u_0) = f(u_0) + (η^2+ε)\xe_t(u_0,t) \ge δ_1 - (η^2+ε) c_0 >0.
 \]
 Therefore, every critical point of $y$ in $(δ,b)$ would have to be a local minimum, which is not possible (note that $y_u(δ)>0$), and we can conclude that $y_u>0$ on all of $[δ,b)$. Passing to the limit $ε\searrow 0$, due to \eqref{conv:hoelder}, $x_u(\cdot,t)\ge η$ in $[δ,b]$ and by the definition of $δ$, this proves positivity of $x_u(\cdot,t)$ in $[α,b]$. 
\end{proof}

\begin{proof}[Proof of Theorem~\ref{thm:bdry}]
 According to \eqref{def:xinfty-general}, we have 
 \[
  x^{∞}(u) = \f{u^2}{2}+ \f{4-b^2}{2b} u -1, \qquad u\in[0,b].
 \]
 The choice of $b>2$ hence implies $x_u^{∞}(0)<0$ and \eqref{eq:thm:conv} thus ensures that $t_0$ is finite. With $α=0$, we can apply Lemma~\ref{lem:firstatzero}, immediately obtaining \eqref{eq:nullstellevonxu}.
\end{proof}

\section{A symmetric setting. Proof of Theorem~\ref{thm:special}} \label{sec:special}

In this whole section, we work toward the proof of Theorem~\ref{thm:special}, that is, we deal with the symmetric setting of said theorem. The first elementary observation is that the solution retains the symmetry. Moreover, we can restrict arguments to the half $(0,b)$ of the domain, still retaining useful boundary conditions at $u=0$. 

\begin{lemma}
 Let $-a=b\in(0,∞)$, $f(u)\equiv u$, and assume that $x_0$ satisfies \eqref{cond:x0} and \eqref{cond:x0:special}. 
Then the solution $x$ to \eqref{xeq} given by Theorem~\ref{thm:existence} satisfies $x(u,t)=-x(-u,t)$ for every $u\in(-b,b)$ and $t>0$. In particular,  
 \begin{align}
  x(0,t)&=0 \qquad \text{ for all } t>0\label{eq:special:xzero}\\
 \text{and }\; x_t(0,t)&=0 \qquad \text{ for all } t>0.\label{eq:special:xtzero}
 \end{align} 
\end{lemma}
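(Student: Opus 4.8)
The plan is to establish the symmetry $x(u,t)=-x(-u,t)$ by a uniqueness-type argument at the level of the regularized problem \eqref{xepseq}, and then to pass to the limit. First I would fix $ε>0$ and define $\tilde{x}_ε(u,t):=-\xe(-u,t)$ for $(u,t)\in[-b,b]\times[0,∞)$. A direct computation, using that $f(u)=u$ is odd and that $-a=b$, shows that $\tilde{x}_ε$ solves exactly the same initial–boundary value problem \eqref{xepseq}: the PDE is invariant because replacing $u$ by $-u$ and $x$ by $-x$ sends $x_{uu}$ to $-x_{uu}$ and $f(u)$ to $-f(u)$, while $x_u^2$ is unchanged; the boundary values $\tilde{x}_ε(-b,t)=-\xe(b,t)=-1$ and $\tilde{x}_ε(b,t)=-\xe(-b,t)=1$ match; and $\tilde{x}_ε(\cdot,0)=-x_0(-\cdot)=x_0$ by the oddness assumption in \eqref{cond:x0:special}. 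By the uniqueness part of Lemma~\ref{lem:exeps}, $\tilde{x}_ε\equiv\xe$, i.e. $\xe(u,t)=-\xe(-u,t)$ for all $ε>0$.

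Next I would pass to the limit along the sequence $ε_k\searrow0$ from Lemma~\ref{lem:conv}. Since $x_{ε_k}(u,t)=-x_{ε_k}(-u,t)$ and $x_{ε_k}\to x$ in $C^0_{loc}([0,∞);C^{1+α}([a,b]))$ by \eqref{conv:hoelder}, the identity is preserved in the limit and $x(u,t)=-x(-u,t)$ for every $u\in(-b,b)$ and $t>0$. Evaluating at $u=0$ gives $x(0,t)=-x(0,t)$, hence \eqref{eq:special:xzero}. For \eqref{eq:special:xtzero}, since $x(0,t)=0$ for all $t>0$, the map $t\mapsto x(0,t)$ is constant, so its (distributional, and here essentially-bounded by $x_t\in L^\infty$) time derivative vanishes; more concretely, one can note that $\xe(0,t)=0$ for all $t$ by the symmetry of $\xe$, whence $\xe_t(0,t)=0$ for all $t>0$, and this passes to the limit together with \eqref{conv:xtli} interpreted pointwise where the limit function is continuous, or simply observe that $x_t(0,\cdot)=\partial_t x(0,\cdot)=\partial_t 0=0$ in $L^\infty((0,\infty))$.

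The main obstacle, such as it is, is a matter of care rather than depth: one must be sure the reflection $\tilde{x}_ε$ genuinely lands in the regularity class \eqref{regularity:xe} and satisfies \eqref{xepseq} in the classical sense so that the uniqueness statement of Lemma~\ref{lem:exeps} applies verbatim — this is immediate since reflection is a smooth change of variables. A secondary point is justifying \eqref{eq:special:xtzero} rigorously: $x_t$ is only an $L^\infty$ function, so "$x_t(0,t)=0$" should be read as the statement that the essentially bounded weak time derivative of the (continuous, identically zero) function $t\mapsto x(0,t)$ is zero, which is clear; alternatively, it follows directly from $\xe_t(0,t)=0$ together with the fact that $\xe_t\to x_t$ in a sense strong enough to preserve this boundary value (e.g. using the equation $x_{uu}-f(u)=x_u^2x_t$ established in the proof of Lemma~\ref{lem:intfinite:xuu} and the symmetry $x_{uu}(0,t)=0=f(0)$). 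Either route is short, so the whole lemma is essentially a reflection-plus-uniqueness argument followed by a passage to the limit.
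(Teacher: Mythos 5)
Your proposal is correct and follows exactly the paper's argument: reflection invariance of \eqref{xepseq} plus uniqueness from Lemma~\ref{lem:exeps} gives oddness of $\xe$, which passes to the limit via \eqref{conv:hoelder}, and \eqref{eq:special:xzero}--\eqref{eq:special:xtzero} follow by evaluating at $u=0$. The extra care you take with the interpretation of $x_t(0,t)=0$ is reasonable but not needed beyond what the paper states.
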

\begin{proof}
By uniqueness of the solutions to \eqref{xepseq}, $\xe(u,t)=-\xe(-u,t)$ for all $t>0$ and hence \eqref{conv:hoelder} ensures that also $x(\cdot,t)$ is odd for every $t>0$. This implies $x(0,t)=0$ for all $t>0$, which in turn proves \eqref{eq:special:xtzero}. 
\end{proof}

At each $u\in[0,b)$, the solution $\xe(u,t)$ decreases with respect to $t$ (and accordingly increases in $(-b,0]$).

\begin{lemma}\label{lem:decreasingforpositivearguments}
 Let $-a=b\in(0,∞)$, $f(u)\equiv u$, and assume that $x_0$ satisfies \eqref{cond:x0} and \eqref{cond:x0:special}. 
 For every $ε>0$, the function $\xe(u,\cdot)$ from Lemma~\ref{lem:exeps} is nonincreasing for every $u\in[0,b)$. 
\end{lemma}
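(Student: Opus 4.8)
The natural idea is to show that $w_ε := \partial_t \xe$ is nonpositive on the half-strip $(0,b)\times(0,\infty)$ by a maximum principle argument, working on the reduced domain $[0,b]$ and exploiting the boundary condition $w_ε(0,t)=0$ (which comes from \eqref{eq:special:xtzero}, itself a consequence of the oddness of $\xe$). Recall from the proof of Lemma~\ref{lem:bound:xet} that $w_ε=\xe_t$ solves the linear parabolic equation
\[
 w_{ε,t}=\f1{\xe_u^2+ε}\,w_{ε,uu}-\f{2\xe_u(\xe_{uu}-f(u))}{(\xe_u^2+ε)^2}\,w_{ε,u}\qquad\text{in }(a,b)\times(0,\infty),
\]
with $w_ε(a,t)=w_ε(b,t)=0$. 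The plan is to re-read this equation only on $(0,b)\times(0,\infty)$ and apply the maximum principle there.

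First I would check the boundary behaviour of $w_ε$ on the reduced domain: at $u=b$ we still have $w_ε(b,t)=0$ from \eqref{xepseq}, and at $u=0$ the symmetry $\xe(u,t)=-\xe(-u,t)$ forces $\xe_t(0,t)=0$, so $w_ε(0,t)=0$ for all $t>0$ as well. Next I would examine the initial data: at $t=0$, $w_ε(\cdot,0)=\f{{x_0}_{uu}-f(u)}{{x_0}_u^2+ε}$, and on $[0,b)$ the hypothesis ${x_0}_{uu}(u)\le u=f(u)$ from \eqref{cond:x0:special} gives $w_ε(u,0)\le 0$ there. (At $u=b$ it is $0$, consistent with the boundary value.) So $w_ε\le 0$ holds on the parabolic boundary of $(0,b)\times(0,\infty)$. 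Since $w_ε$ satisfies a linear, uniformly (for fixed $ε$) parabolic equation with bounded, smooth coefficients on $[0,b]\times[0,T]$ for every $T$ — the coefficients are smooth by \eqref{regularity:xe} and $\xe_u^2+ε\ge ε>0$ — the comparison principle (comparing $w_ε$ with the zero supersolution, exactly as in the proof of Lemma~\ref{lem:bound:xet}) yields $w_ε\le 0$ throughout $(0,b)\times(0,\infty)$. That is precisely the claim that $\xe(u,\cdot)$ is nonincreasing for every $u\in[0,b)$.

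The main subtlety — though it is minor — is just making sure the boundary condition at the artificial boundary $u=0$ is genuinely available for the regularized solution $\xe$ (not merely the limit $x$), and it is: uniqueness in Lemma~\ref{lem:exeps} gives oddness of $\xe$, hence $\xe_t(0,t)=0$ directly, with no passage to the limit needed. The other point to state cleanly is that the comparison is on each finite time interval $[0,T]$, which suffices since $T$ is arbitrary. Everything else is the same comparison argument already used for Lemma~\ref{lem:bound:xet}, now applied on the half-domain.
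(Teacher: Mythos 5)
Your proof is correct and follows essentially the same route as the paper: the paper also considers $y_ε:=\xe_t$ on the half-domain $(0,b)\times(0,\infty)$, notes that the left boundary condition $y_ε(0,t)=0$ comes from the oddness of $\xe$ (itself from uniqueness in Lemma~\ref{lem:exeps}), that the initial data $y_ε(\cdot,0)=\f{{x_0}_{uu}-u}{{x_0}_u^2+ε}\le 0$ by \eqref{cond:x0:special}, and then compares with the supersolution $0$. No gaps; your remark that the symmetry of $\xe$ is available at the level of the regularized problem is exactly the point the paper relies on.
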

\begin{proof}
 If we let $y_{0ε}(u):=\f{x_{0uu}(u)-u}{x_{0u}^2(u)+ε}$, then $y_{0ε}(u)\le 0$ for every $u\in(0,b)$,  and $\ye:=\xe_t$ satisfies 
 \begin{align*}
  \yet &= \f1{\xe_u^2+ε} \ye_{uu} - \f{2\xe_u(\xe_{uu}-u)}{(\xe_u^2+ε)^2} \ye_u\quad \text{in } (0,b)\times (0,∞),\\
  \ye(0,t)&=0=\ye(b,t)\qquad \text{for all } t>0,\qquad \ye(\cdot,0)=y_{0ε} \text{ in }[0,b].
 \end{align*}
 Here, apparently, the left boundary condition relies on \eqref{eq:special:xtzero}. %uses that $\xe(0,t)=0$ for all $t\in(0,∞)$. 
 Comparison with the upper solution $0$ shows that $\xe_t$ is nonpositive throughout $(0,b)\times(0,∞)$. 
\end{proof}

Together with \eqref{eq:special:xzero}, this has an implication on the spatial derivative $x_u$ at $0$: It cannot increase over time.

\begin{lemma}\label{lem:neveragaininvertible}
 Let $-a=b\in(0,∞)$, $f(u)\equiv u$, and assume that $x_0$ satisfies \eqref{cond:x0} and \eqref{cond:x0:special}. With $x$ being the solution to \eqref{xeq} from Theorem~\ref{thm:existence}, the map 
 \[(0,∞)\ni t\mapsto x_u(0,t)\]
 is nonincreasing. In particular: If $x_u(0,t_0)\le 0$ for some $t_0\in (0,∞)$, then $x_u(0,t)\le 0$ for every $t\ge t_0$ and $x(\cdot,t)$ has no differentiable inverse for any $t\ge t_0$.
\end{lemma}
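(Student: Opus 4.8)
The plan is to prove monotonicity of $t\mapsto x_u(0,t)$ by transferring the corresponding fact for the regularized problems through the convergence \eqref{conv:hoelder}, and then to read off the geometric consequence from \eqref{eq:special:xzero} together with oddness. First I would fix $\varepsilon>0$ and two times $0<t_1<t_2$. By Lemma~\ref{lem:decreasingforpositivearguments}, $x_\varepsilon(u,t_2)\le x_\varepsilon(u,t_1)$ for every $u\in[0,b)$, while $x_\varepsilon(0,t_1)=x_\varepsilon(0,t_2)=0$ by oddness. Dividing by $u>0$ and letting $u\searrow 0$ gives ${x_\varepsilon}_u(0,t_2)\le {x_\varepsilon}_u(0,t_1)$, i.e. $t\mapsto {x_\varepsilon}_u(0,t)$ is nonincreasing for each $\varepsilon$. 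Then \eqref{conv:hoelder} (uniform-in-$[a,b]$, locally-uniform-in-$t$ convergence of ${x_{\varepsilon_k}}_u$ to $x_u$) passes this inequality to the limit, so $t\mapsto x_u(0,t)$ is nonincreasing on $(0,\infty)$.

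For the ``in particular'' part, suppose $x_u(0,t_0)\le 0$ for some $t_0>0$. Monotonicity immediately yields $x_u(0,t)\le x_u(0,t_0)\le 0$ for all $t\ge t_0$. To conclude that $x(\cdot,t)$ has no differentiable inverse for such $t$, I would argue as follows. If $x(\cdot,t)$ had a differentiable inverse on its range, then in particular $x(\cdot,t)$ would have to be a strictly monotone $C^1$ function with nonvanishing derivative; since $x(a,t)=-1<1=x(b,t)$, strict monotonicity would force $x_u\ge 0$ throughout, and nonvanishing derivative would force $x_u(0,t)>0$, contradicting $x_u(0,t)\le 0$. (More simply: a differentiable inverse $y\mapsto u(y,t)$ satisfies $u_y(x(0,t),t)\cdot x_u(0,t)=1$, which is impossible when $x_u(0,t)=0$, and when $x_u(0,t)<0$ it contradicts that the inverse, if it exists, is increasing because $x$ takes the value $-1$ at the left endpoint and $1$ at the right endpoint.) Either way the assumption that a differentiable inverse exists is untenable once $x_u(0,t)\le 0$.

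The only point requiring a little care is the limit passage: one must make sure the divided-difference argument is applied to $x_\varepsilon$ (which is genuinely $C^1$ in $u$ by \eqref{regularity:xe}) before taking $\varepsilon=\varepsilon_k\searrow 0$, rather than trying to differentiate limits. Since ${x_{\varepsilon_k}}_u(0,t)\to x_u(0,t)$ for each fixed $t$ by \eqref{conv:hoelder}, and each ${x_{\varepsilon_k}}_u(0,\cdot)$ is nonincreasing, the pointwise limit inherits monotonicity without any further estimate. I do not expect a serious obstacle here; the main (mild) subtlety is just phrasing the ``no differentiable inverse'' conclusion precisely, and that is handled by the sign/endpoint argument above.

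\begin{proof}
 Fix $\varepsilon>0$ and $0<t_1<t_2$. By oddness of $x_\varepsilon(\cdot,t)$ we have $x_\varepsilon(0,t_1)=x_\varepsilon(0,t_2)=0$, and Lemma~\ref{lem:decreasingforpositivearguments} gives $x_\varepsilon(u,t_2)\le x_\varepsilon(u,t_1)$ for every $u\in[0,b)$. Hence for $u\in(0,b)$,
 \[
  \f{x_\varepsilon(u,t_2)-x_\varepsilon(0,t_2)}{u}\le \f{x_\varepsilon(u,t_1)-x_\varepsilon(0,t_1)}{u},
 \]
 and letting $u\searrow 0$ (using $x_\varepsilon\in C^{2,1}$ from \eqref{regularity:xe}) yields ${x_\varepsilon}_u(0,t_2)\le {x_\varepsilon}_u(0,t_1)$. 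Thus $t\mapsto {x_\varepsilon}_u(0,t)$ is nonincreasing for every $\varepsilon>0$. For fixed $t>0$, \eqref{conv:hoelder} shows ${x_{\varepsilon_k}}_u(0,t)\to x_u(0,t)$ as $k\to\infty$, so the pointwise limit $t\mapsto x_u(0,t)$ is also nonincreasing on $(0,\infty)$.

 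Now suppose $x_u(0,t_0)\le 0$ for some $t_0>0$. Monotonicity gives $x_u(0,t)\le x_u(0,t_0)\le 0$ for all $t\ge t_0$. Fix such a $t$. If $x(\cdot,t)$ had a differentiable inverse $\psi$, then $\psi(x(u,t))=u$ for $u\in[-b,b]$ and differentiation at $u=0$ would give $\psi'(x(0,t))\,x_u(0,t)=1$, which is impossible if $x_u(0,t)=0$; and if $x_u(0,t)<0$, then, since an invertible continuous function on $[-b,b]$ with $x(-b,t)=-1<1=x(b,t)$ must be strictly increasing, we would need $x_u\ge 0$ throughout $[-b,b]$, again a contradiction. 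Hence $x(\cdot,t)$ has no differentiable inverse for any $t\ge t_0$.
\end{proof}
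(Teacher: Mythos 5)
Your proof is correct and takes essentially the same route as the paper: time-monotonicity of $x(u,\cdot)$ for $u\in(0,b)$ from Lemma~\ref{lem:decreasingforpositivearguments}, the normalization $x(0,t)=0$ from oddness, a divided-difference argument at $u=0$, and the same endpoint/chain-rule dichotomy for the non-invertibility claim. (Your inequality directions are in fact the consistent ones; the paper's displayed relation $x(u,t)\le x(u,t+\tau)$ carries a typographical sign flip relative to Lemma~\ref{lem:decreasingforpositivearguments} and the stated conclusion.)
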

\begin{proof}
 We know that for every $t>0$, $x(\cdot,t)$ is odd and hence $x(0,t)=0$. Since furthermore $x(u,t)\le x(u,t+τ)$ for any $u\in(0,b)$ and $τ>0$ by Lemma~\ref{lem:decreasingforpositivearguments} and \eqref{conv:hoelder}, we have that 
 \[
  x_u(0,t)=\lim_{u\searrow 0^+} \f{x(u,t)}{u} \le \lim_{u\searrow 0^+} \f{x(u,t+τ)}{u} = x_u(0,t+τ).
 \]
 If for some $t>0$, $x_u(0,t)<0$, then -- due to $x(a,t)<x(b,t)$ -- $x(\cdot,t)$ is not injective and hence not invertible. If $x_u(0,t)=0$, then even if $x(\cdot,t)$ is invertible, its inverse will not be differentiable at $0=x(0,t)$. 
\end{proof}

\begin{proof}[Proof of Theorem~\ref{thm:special}]
 For this choice of $f$, $x^{∞}$ is given explicitly by \eqref{def:xinfty-example}. If $b>\sqrt[3]6$, then $x_u^{∞}(0)=\f{6-b^3}{6b}<0$ and hence according to Theorem~\ref{thm:longterm}, the last part of the statement is proven and, furthermore, the set 
 \[
  S:=\set{t\in(0,∞)\mid \exists u\in (-b,b):\;x_u(u,t)<0}
 \]
 is nonempty. If we let $t_0:=\inf S$, then \eqref{cond:x0} together with the regularity assertion $x\in C^0([0,∞);C^{1+α}([a,b]))$ from Theorem~\ref{thm:existence} ensures positivity of $t_0$. 
 Moreover, Lemma~\ref{lem:firstatzero}, applied for $α=0$, shows that $x_u(0,t_0)=0$ and Lemma~\ref{lem:neveragaininvertible} guarantees that for no $t\ge t_0$ the function $x(\cdot,t)\colon [-b,b]\to [-1,1]$ has a differentiable inverse. 
\end{proof}

\appendix

\section{Equivalence of the problems}\label{sec:equivalence}
A central observation this work relies on is that the problems \eqref{ueq} and \eqref{xeq} can be transformed into each other. We therefore give some details concerning this equivalence, even though the arguments are rather elementary. 
\subsection{Turning $u$ into $x$}\label{sec:u-to-x}
Assume that $b>a$, $T>0$, $u_0$ is monotone increasing with $u_0(-1)=a$, $u_0(1)=b$ and $u\in C^{4,2}([-1,1]\times(0,T))\cap C([-1,1]\times[0,T))$ solves \eqref{ueq}. Then, by comparison (\cite[Prop. 52.7]{QS}) with the constant solutions $a$ and $b$, respectively, we obtain that $a\le u\le b$ in $(-1,1)\times(0,T)$. Accordingly, $v:=u_x$ satisfies $v\ge 0$ on $\set{-1,1}\times(0,T)$ as well as on $(-1,1)\times\set{0}$. As $v$ additionally solves 
\[
 v_t = v_{xx} + 3f(u)u_x^2v_x + u_x^3f'(u) v,
\]
the strong maximum principle (see \cite[Prop. 52.7]{QS}) readily implies $v>0$ in $(-1,1)\times(0,T)$. Therefore, for every $t\in [0,T)$, the function $u(\cdot,t)\colon [-1,1]\to [a,b]$ is invertible and we can define 
\[
 x(\cdot,t) := \kkl{u(\cdot,t)}^{-1}\colon [a,b]\to[-1,1],\qquad \text{for every } t\in[0,T). 
\]
This function is differentiable with respect to $u\in(a,b)$, and $x_u(u,t)=\f1{u_x(x(u,t),t)}$ for all $u\in(a,b), t\in[0,T)$. Regularity of $u$ and the Implicit Function Theorem furthermore ensure the existence of second spatial and first temporal derivatives of $x$, and it is easy to see that 
\begin{align*}
 0&=\f{∂^2}{∂ξ^2} ξ 
  = \f{∂^2}{∂ξ^2} x(u(ξ,t),t) 
%  = \f{∂}{∂ξ} (x_u(u(ξ,t),t)u_{ξ}(ξ,t))\\
%  & 
 = x_u(u(ξ,t),t)u_{ξξ}(ξ,t) + x_{uu}(u(ξ,t),t)u_{ξ}^2(ξ,t) 
\end{align*}
and hence 
\[
 u_{ξξ}(ξ,t)=\f{-x_{uu}(u(ξ,t),t) u_{ξ}^2(ξ,t)}{x_u(ξ,t)}\qquad \text{for every } ξ\in(-1,1), ~t\in[0,T). 
\]
Similarly, for all $(ξ,t)\in(-1,1)\times(0,T)$,
\[
 0=\ddt ξ = \ddt x(u(ξ,t),t) = x_u(u(ξ,t),t)u_t(ξ,t) + x_t(u(ξ,t),t), 
\]
i.e. 
\begin{align*}
 x_t(u(ξ,t),t)
  &=-x_u(u(ξ,t),t)u_t(ξ,t) = -x_u(u(ξ,t),t) \kl{u_xx(ξ,t) + f(u(ξ,t)) u_{ξ}^3(ξ,t)} \\
  &= -x_u(u(ξ,t),t) \kl{\f{-x_{uu}(u(ξ,t),t) u_{ξ}^2(ξ,t)}{x_u(ξ,t)} + \f{f(u(ξ,t))}{x_u^3(u(ξ,t),t)}}\\
%  &= x_{uu}(u(ξ,t),t) u_{ξ}^2(ξ,t) - \f{f(u(ξ,t))}{x_u^2(u(ξ,t),t)}\\
 &= \f{x_{uu}(u(ξ,t),t) - f(u(ξ,t))}{x_u^2(u(ξ,t),t)}, 
\end{align*}
which means that 
\[
 x_t(u,t) = \f{x_{uu}(u,t) - f(u)}{x_u^2(u,t)}\qquad \text{for all } u\in (a,b),\; t\in(0,T).
\]

\subsection{Turning $x$ into $u$}\label{sec:x-to-u}
Let $x\in C^{2,1}([a,b]\times[0,T))$ satisfy $x_t=\f{x_{uu}-f(u)}{x_u^2}$, $x(a,\cdot)=-1$, $x(b,\cdot)=1$ and $x_u>0$ in $[a,b]\times[0,T)$ for some $T>0$. Then we can define 
\[
 u(\cdot,t):=\kkl{x(\cdot,t)}^{-1}, \qquad t\in [0,T), 
\]
and conclude from the Implicit Function Theorem and 
computations similar to those in Section \ref{sec:u-to-x} that $u\in C^{2,1}([-1,1]\times[0,T))$ and 
% the computations below that $u\in C^{2,1}([-1,1]\times[0,T))$. Moreover, 
% \[
%  u_x(x(v,t),t)=\f1{x_v(v,t)}, \qquad \text{for all } v\in(a,b),\; t\in(0,T).
% \]
% Also, 
% \begin{align*}
%  0&=\f{∂^2}{∂v^2} v = \f{∂^2}{∂v^2} \kl{u(x(v,t),t)} = \f{∂}{∂v} \kl{u_x(x(v,t),t)x_v(v,t)}\\
%  &= u_{xx}(x(v,t),t)x_v^2(v,t)+u_x(x(v,t),t)x_{vv}(v,t), 
% \end{align*}
% i.e. 
% \[
%  x_{vv}(v,t)=-\f{u_{xx}(x(v,t),t)}{u_x^3(x(v,t),t)}
% \]
% for all $v\in(a,b)$, $t\in(0,T)$. 
% Furthermore, 
% \begin{align*}
%  0&=\ddt v = \ddt u(x(v,t),t) = u_x(x(v,t),t)x_t(x,t) + u_t(x(v,t),t) 
% \end{align*}
% and hence 
% \begin{align*}
%  u_t(x(v,t),t) &= -u_x(x(v,t),t) x_t(v,t) \\
%  &= -u_x(x(v,t),t) \cdot \f{x_{vv}(v,t)-f(v)}{x_v^2(v,t)}\\
%  &= u_x^3(x(v,t),t) (-x_{vv}(v,t)-f(v))\\
%  &= u_{xx}(x(v,t),t) + f(u(x(v,t),t))u_x^3(x(v,t),t)
% \end{align*}
% for all $v\in(a,b)$, $t\in(0,T)$. In other words, 
\[
 u_t(x,t)=u_{xx}(x,t)+f(u(x,t))u_x^3(x,t) \qquad \text{for all } x\in(-1,1),\;t\in(0,T).
\]

\textbf{Acknowledgements.} The first author was supported in part by the Slovak
Research and Development Agency under the contract No. APVV-14-0378 and by the VEGA grant
1/0347/18. 

{\footnotesize 
% \bibliographystyle{abbrv}
% \bibliography{bibtex_IGBU.bib}
\def\cprime{$'$}

}

\end{document}